\newcommand{\ra}[1]{\renewcommand{\arraystretch}{#1}}
\newtheorem{propos}{Proposition}
\newcommand{\LMe}{\varepsilon}
\newcommand{\LMo}{\omega}
\newcommand{\LMx}{\bar x}
\newcommand{\LMR}{\mathbb R}
\newcommand{\LME}{\mathbb E}
\newcommand{\LMn}[1]{\|#1 \|} 
\newcommand{\LMlr}[1]{\langle #1\rangle} 
\newcommand{\LMhx}{\hat x}
\newcommand{\LMhg}{\hat g}
\DeclareMathOperator{\LMprox}{prox}
\DeclareMathOperator{\alphargmin}{argmin}
\DeclareMathOperator{\LMid}{Id}
\DeclareMathOperator{\deltaist}{dist}
\DeclareMathOperator{\LMDiag}{Diag}
\begin{document}

\title*{Block-coordinate primal-dual method for the nonsmooth minimization
over linear constraints}

\author{D. Russell Luke and Yura Malitsky}
\institute{D. Russell Luke \at Institute for Numerical and Applied
Mathematics,  University of G\"ottingen, 37083 G\"ottingen, Germany, \email{r.luke@math.uni-goettingen.de}
\and  Yura Malitsky \at Institute for Numerical and Applied
Mathematics,  University of G\"ottingen, 37083 G\"ottingen, Germany, \email{yurii.malitskyi@math.uni-goettingen.de}. }
\authorrunning{Block-coordinate primal-dual method}
%
%

\maketitle
\abstract*{
We consider the problem of minimizing a convex, separable,
nonsmooth function subject to linear constraints. The numerical method
we propose is a block-coordinate extension of the Chambolle-Pock
primal-dual algorithm.  We
prove convergence of the method without resorting to assumptions like
smoothness or strong convexity of the objective, full-rank condition
on the matrix, strong duality or even consistency of the linear
system.  Freedom from imposing the latter assumption permits
convergence guarantees for misspecified or noisy systems.
}

\abstract{
    We consider the problem of minimizing a convex, separable, nonsmooth
    function subject to linear constraints. The numerical method we propose is a block-coordinate
extension of the Chambolle-Pock primal-dual algorithm.  We prove convergence of the method
    without resorting to assumptions like smoothness or strong
    convexity of the objective, full-rank condition on the matrix,
    strong duality or even consistency of the linear system.  Freedom from
    imposing the latter assumption permits convergence guarantees for misspecified or
    noisy systems.
}
\bigskip

\noindent {\bfseries 2010 Mathematics Subject Classification:} {49M29 65K10 65Y20 90C25}

{\noindent {\bfseries Keywords:} Saddle-point problems, first order
algorithms, primal-dual algorithms, coordinate methods, randomized methods}

\section{Introduction}\label{sec:intro}

We propose a randomized coordinate primal-dual algorithm for
convex optimization problems of the form
\begin{equation}
    \label{eq:main2}
    \min_x g(x)\quad \text{s.t.}\quad x \in \alphargmin_z  \frac 1 2
    \LMn{Az-b}^2.
  \end{equation}
This is a generalization of the more commonly encountered linear constrained convex optimization problem
\begin{equation}
    \label{eq:main} \min_x g(x) \quad \text{s.t.}\quad Ax = b.
\end{equation}

When $b$ is in the range of $A$ problem \eqref{eq:main} and \eqref{eq:main2} have the same optimal solutions;
but \eqref{eq:main2} has the advantage of having solutions even when $b$ is {\em not} in the
range of $A$.  Such problems will be called {\em inconsistent} in what follows.
Of course, the solution set to \eqref{eq:main2} can be modeled by a problem with the format \eqref{eq:main} via the normal equations.
The main point of this note, however, is that the two models suggest very different algorithms with different
behaviors, as explained in \cite{malitsky2017} and in Section \ref{sec:applications} below.
%

%
We do not assume that
$g$ is smooth, but this is not our main concern.  Our main focus in this note
is the efficient use of problem structure.  In particular, we assume throughout that the problem can be
decomposed in the following manner.  For $x\in \LMR^n$, $A\in\LMR^{m\times n}$
\[ A  = [A_1,\dots, A_p]  \quad\mbox{ and }\quad
 g(x)  =\sum_{i=1}^p g_i(x_i),
\]
where $x_i\in\LMR^{n_i}$, $\sum_{i=1}^p n_i=n$,
$A_i \in \LMR^{m \times n_i}$ and
$g_i:\LMR^{n_i}\to (-\infty,+\infty]$ is proper, convex and lower
semi-continuous (lsc). The coordinate primal-dual method we propose
below allows one to achieve improved stepsize choice, tailored to the
individual blocks of coordinates.  To this, we add an intrinsic
randomization to the algorithm which is particularly well suited for
large-scale problems and distributed implementations. Another
interesting property of the proposed method is that in the absence of
the existence of Lagrange multipliers one can still obtain meaningful
convergence results.

Randomization is currently the leading technique for handling
extremely large-scale problems.  Algorithms employing some sort of
randomization have been around for more than 50 years, but they have
only recently emerged as the preferred -- indeed, sometimes the only
feasible -- numerical strategy for large-scale problems in machine
learning and data analysis.  The dominant randomization strategies can
be divided roughly into two categories. To the first category belong
stochastic methods, where in every iteration the full vector is
updated, but only a fraction of the given data is used. The main
motivation behind such methods is to generate descent directions
cheaply. The prevalent methods SAGA \cite{defazio2014saga} and SVR
\cite{johnson2013accelerating} belong to this group.  Another category
is coordinate-block methods. These methods update only one coordinate
(or one block of coordinates) in every iteration. As with stochastic
methods, the per iteration cost is very low since only a fraction of
the data is used, but coordinate-block methods can also be accelerated
by choosing larger step sizes.  Popular methods that belong to this
group are \cite{nesterov2012efficiency,fercoq2015accelerated}. A
particular class of coordinate methods is alternating minimization
methods, which appear to be a promising approach to solving nonconvex
problems, see
\cite{attouch2010proximal,bolte2014proximal,hesse2015proximal}.

To keep the presentation simple, we eschew many possible generalizations and extensions of our
proposed method.  For concreteness we focus our attention on the primal-dual algorithm (PDA)
of Chambolle-Pock
\cite{chambolle2011first}.
The PDA is a well-known first-order
method for solving saddle point problems with nonsmooth
structure.  It is based on the {\em proximal} mapping associated with a function $g$ defined by
$
\LMprox_{\tau g} = (\LMid + \tau \partial g)^{-1}
$
where $\partial g(\LMx)$ is the convex {\em subdifferential} of $g$ at $\LMx$,
defined as the set of all vectors
$v$ with
\begin{equation}\label{e:csd}
g(x)- g(\LMx) - \langle{v}, {x-\LMx}\rangle \geq 0\quad\forall x.
\end{equation}

The PDA applied to the Lagrangian of problem~\eqref{eq:main} generates two
sequences $(x^k), (y^k)$ via
\begin{equation}\label{eq:pda}
    \begin{aligned}
        x^{k+1} & = \LMprox_{\tau g}(x^k - \tau A^T y^{k})\\
        y^{k+1} & = y^k + \sigma (A(2x^{k+1}-x^k)-b).
    \end{aligned}
\end{equation}
Alternative approaches such as the alternating direction method of multipliers
\cite{Glowinski75} are also currently popular  for large-scale problems and
are based on the {\em augmented} Lagrangian of \eqref{eq:main}.
%
%
The advantage of PDA over ADMM, however, is that it does not require one to invert the matrix $A$, and hence can
be applied to very large-scale problems.
The PDA is very similar, in fact,  to a
special case of the proximal point algorithm~\cite{he-yuan:2012}
and of the so-called {\em proximal} ADMM \cite{shefi2014rate,eckstein1994some,banert2016fixing}.

\begin{algorithm}
  \begin{algorithmic}[1]
    \STATE Choose $x^0 \in \LMR^n$, $\sigma>0$, $\tau\in \LMR^p_{++}$  and set $y^0=u^0 =
    \sigma(Ax^0-b)$.
    \FOR{ $k=0,1,2,\dots$}
    \STATE $x^{k+1} = x^k$
    \STATE Pick an
    index $i\in \{1,\dots, p\}$ uniformly at random.
    \STATE $x^{k+1}_{i} =\LMprox_{\frac{\tau_i}{p} g_i}(x^k_i -
    \frac{\tau_i}{p} A^T_i y^k)$,\quad $t_i = x^{k+1}_i-x^k_i$
    \STATE $y^{k+1} = y^k + u^k + \sigma (p+1) A_it_i$
    \STATE $u^{k+1} = u^k + \sigma A_it_i$
\ENDFOR
\end{algorithmic}
\caption{Coordinate primal-dual algorithm}
\label{alg:pd}
\end{algorithm}
The procedure we study in this note is given in Algorithm \ref{alg:pd}.
The cost per iteration is very low: it
requires two dot products $A_it_i$, $A^T_iy^k$; and the full
vector-vector operation is needed only for the dual variables $y^k,
u^k$ in steps~$6$--$7$.  The algorithm will therefore be the most efficient if
$m \leq n$.  If in particular all blocks are of size $1$, that is $n_i
= 1$, $n=p$ and $A_i$ is just the $i$-th column of the matrix $A$,
then $A_it_i$ reduces to the vector-scalar multiplication and $A_i^T
y^k$ to the vector-vector dot product.  Moreover, if the matrix $A$ is
sparse and $A_i$ is its $i$-th column, then step~$7$ requires an
update of only those coordinates which are nonzero in $A_i$.  The
memory storage is also relatively small: we have to keep $x^k$, and two
dual variables $y^k$, $u^k$.  Another important feature of the
proposed algorithm is that it is well suited for the distributed optimization: 
since there are no operations with a full primal vector, we can keep
different blocks $x_i$ on different machines which are coupled
only over dual variables.

We want to highlight that with $p=1$ the proposed algorithm indeed
coincides with the primal-dual algorithm of
Chambolle-Pock~\cite{chambolle2011first}. In fact, in this case it is
not difficult to prove by induction that $u^k = \sigma (Ax^k-b)$ and
hence, $y^{k+1} = y^k + \sigma (A(2x^{k+1}-x^k)-b)$.

To the best of our knowledge the first randomized extension of Chambolle-Pock algorithm can be found in
\cite{zhang2015stochastic}. This method was proposed for
solving the empirical risk minimization problem and is very popular
for such kinds of problems. However, it converges under quite
restrictive assumptions that never hold for
problem~\eqref{eq:main}, see more details in
Section~\ref{sec:applications}. Another interesting coordinate
extension of the primal-dual algorithm is studied in
\cite{fercoq2015coordinate}.  This does not require any special
assumptions like smoothness or strong convexity, but, unfortunately,
it requires an increase in the dimensionality of the problem, which for our
targeted problems is counterproductive.

Recently there have also appeared coordinate methods for abstract
fixed point algorithms
\cite{combettes2015stochastic,iutzeler2013asynchronous}. Although they
provide a useful and general way for developing coordinate algorithms,
they do not allow the use of larger stepsizes --- one of the major
advantages of coordinate methods.

There are also some randomized methods for a particular choice of $g$
in \eqref{eq:main}. For $g\equiv 0$
paper~\cite{strohmer2009randomized} proposes a stochastic version of
the Kaczmarz method, see also \cite{leventhal2010randomized} and a
nice review~\cite{wright2015coordinate}. When $g$ is the elastic net
regularization (sum of $l_1$ and squared $l_2$ norms), papers
\cite{lorenz2014linearized,schopfer2016linear} provide a stochastic
sparse Kaczmarz method. Those methods belong to the first category of
randomized methods by our classification, i.e., in every iteration
they require an update of the whole vector.

In \cite{malitsky2017} the connection between the
primal-dual algorithm \eqref{eq:pda} and the Tseng proximal gradient
method~\cite{tseng08} was shown. On the other hand,
paper~\cite{fercoq2015accelerated} provides a coordinate extension of
the latter method for a composite minimization problem. Based on these
two results we propose a coordinate primal-dual method which is an
extension of the original PDA. The key feature of Algorithm \ref{alg:pd}
is that it requires neither strong duality
nor the consistency of the system $Ax=b$ to achieve good numerical 
performance with convergence guarantees. This allows us, for
instance, to recover the signals from noisy measurements without
any knowledge about the noise or without the need to tune some external 
parameters as one must for lasso or basis denoising
problems~\cite{chen2001atomic,candes2006stable}.


\bigskip

In the next section we provide possible applications and connections
to other approaches in the literature.  Section~\ref{sec:analysis} is
dedicated to the convergence analysis of our method. We also give an
alternative form of the method and briefly discuss possible
generalizations of the method. Finally, Section~\ref{sec:numer-exper}
details several numerical examples.



\section{Applications}
\label{sec:applications}

We briefly mention a few of the more prominent applications for Algorithm \ref{alg:pd}.
We begin with the simplest of these.

\textbf{Linear programming. } The linear programming
problem
\begin{equation}
  \label{eq:lp} \min_x\, \LMlr{c,~x}\quad \text{s.t. }\, Ax=b, \quad
x\geq 0
\end{equation}
is a special case of \eqref{eq:main} with $g(x) =
\LMlr{c,~x}+\delta _{x\geq 0}(x)$. In this case $g$ is fully separable. As a
practical example, one can consider the optimal transport
problem~(see for instance \cite{santambrogio2015optimal}).

\bigskip
\textbf{Composite minimization.}
The composite minimization problem
\begin{equation}
    \label{eq:comp} \min_v f(Kv) + r(v),
\end{equation} where both $f, r$ are convex functions, $v\in \LMR^q$,
and $K\in \LMR^{m\times q}$ is a linear operator. For simplicity assume
that both $f$ and $r$ are nonsmooth but allow for easy evaluation of the associated
proximal mappings. In this case
the PDA in \cite{chambolle2011first}
is a widely used method to solve such problems. It is
applied to~\eqref{eq:comp} written in the primal-dual form
\begin{equation}
    \label{eq:comp_pd} \min_v\max_w r(v) + \langle Kv,w\rangle-f^*(w).
\end{equation} Alternatively, one can recast~\eqref{eq:comp} in the
form \eqref{eq:main}.  To see this, let $x = (v, w)$, $g(x) = r(v) +
f(w)$, and $Ax = Kv-w$. Then \eqref{eq:comp} is equivalent to
\begin{equation}
    \label{eq:comp2} \min_x g(x) \quad \text{s.t.}\quad Ax = 0.
\end{equation} Such reformulation is typical for the augmented
Lagrangian method or ADMM~\cite{boyd2011distributed}.  However, this
is not very common to do for the primal-dual method, since
problem~\eqref{eq:comp_pd} is already simpler
than~\eqref{eq:comp2}.
Although the number of matrix-vector operations
for both applications of the PDA remains the same, in \eqref{eq:comp2} we have a larger
problem: $x\in \LMR^{m+q}$, and the multiplier $y\in \LMR^m$ instead of $v\in \LMR^q$, $w\in
\LMR^m$.
However, the advantage for formulation \eqref{eq:comp2} over
\eqref{eq:comp_pd} is that the updates of the
most expensive variable
$x=(v,w)$ can be done in parallel, in contrast to the sequential update
of $v$ and $w$ when we apply PDA to~\eqref{eq:comp_pd}.


Still, the main objection to~\eqref{eq:comp2} is that PDA treats the
matrix $A$ as a black box -- it does not exploit its structure. In
particular, we have $\lambda(A) = \lambda(K)+1$ which globally defines the
stepsizes.  But in our case
$A = [\begin{array}{c|c} K & -I \end{array}]$, whose structure is very tempting to
use: for the block $w$ in $x=(v,w)$ we would like to apply larger
steps that come from $\lambda(I) = 1 \leq \lambda(K)$ (the last inequality is
typical). Fortunately, the proposed algorithm does exactly this: for each block
the steps are defined only by the respective
block of $A$.  This is very similar in spirit to the proximal heterogeneous implicit-explicit
method studied in \cite{hesse2015proximal}.  Notice that the paper~\cite{pock2011diagonal} provides
only the possibility to use different weights for different blocks but
it does not allow one to enlarge stepsizes. In ~\cite{malitsky2016first} a
linesearch strategy is introduced that in fact allows one to use larger
steps, but still this enlargement is based on the inequalities for the
whole matrix $A$ and the vector $x$.  Fercoq and
Bianchi~\cite{fercoq2015coordinate} propose an extension (coordinate)
of the primal-dual method that takes into account the structure of the
linear operator, but this modification requires to increase the dimension
of the problem.


Algorithm~\ref{alg:pd} can be applied without any
smoothing of the nonsmooth functions $f$ and
$r$. This is different from the approach of
\cite{zhang2015stochastic,palaniappan2016stochastic}, where the
convergence is only shown when the fidelity term $f$ is smooth and
the regularizer $r$ is strongly convex.

\bigskip \textbf{Distributed optimization.} The aim of distributed
optimization over a network is to minimize a global objective using
only local computation and communication. Problems in contemporary
statistics and machine learning might be so large that even storing
the input data is impossible in one machine. For applications and
more in-depth overview we refer the reader to
\cite{boyd2011distributed,duchi2012dual,Bertsekas:1989,nedic2010constrained}
and the references therein.

By construction, Algorithm~\ref{alg:pd} is
distributed and hence ideally suited for these types of problems. Indeed, we can
assign to the $i$-th node of our network the block of variables
$x^i$ and the respective block matrix  $A_i$.  The dual variables $y$ and $u$
reside on the central processor. In each
iteration of the algorithm implemented with this architecture only one random
node is activated, requiring communication of the current dual vector $y^k$ from
the central processor in order to compute the update to the block $x_i$.  The node then
returns $A_it_i$ to the central processor.

The model problem~\eqref{eq:main} is also particularly
well-suited for a distributed optimization. Suppose we want to
minimize a convex function $f\colon \LMR^m\to (-\infty, +\infty]$. A
common approach is to assign for each node a copy of $x$ and solve a
constrained problem (the {\em product space formulation}):
\begin{equation}
    \label{eq:distr} \min_{x_1,\dots, x_p}f(x_1)+\dots + f(x_p)\quad
\text{s.t.}\quad x_1=\dots = x_p.
\end{equation} In fact, instead of $x_1= \dots =x_p$ we can introduce
a more general but equivalent\footnote{This means $Ax=0$ if and only
if $x_1=\dots = x_p$.} constraint $Ax = 0$, where $x=(x_1,\dots, x_p)$
and the matrix $A$ describes the given network. By this, we arrive at
the following problem
\begin{equation}
    \label{eq:distr2} \min_x g(x) \quad \text{s.t.}\quad Ax = 0,
\end{equation} where $g(x) = f(x_1) + \dots + f(x_p)$ is a separable
convex function. Solving such problems in a stochastically, 
where in every iteration only one or few nodes are activated,
has attracted a lot of attention recently,
see~\cite{latafat2017new,fercoq2015coordinate,bianchi2014stochastic}.

\bigskip \textbf{Inverse problems.}  Linear systems remain a central
occupation of inverse problems.  The
interesting problems are {\em ill-posed} and the
observed data $A$, $b$ is noisy, requiring an appropriate {\em regularization}.
A standard way is to consider either\footnote{The left and
right problems are also known as Tikhonov and Morozov regularization
respectively.}
\begin{equation}
    \label{eq:inverse} \min_x g(x) + \frac{\delta }{2}\LMn{Ax-b}^2 \qquad
\text{or}\qquad \min_xg(x) \quad \text{s.t.}\quad \LMn{Ax-b}\leq \delta ,
\end{equation}
where $g$ is the regularizer which describes our a
priori knowledge about a solution and $\delta >0$ is some
parameter. The issue of how to choose this parameter is a
major concern of numerical inverse problems. For the right hand-side problem this is
easier to do: usually $\delta $ corresponds to the noise level of the given
problem, but the optimization problem itself is harder than the left
one due to the nonlinear constraints.  Nevertheless, it can be still
efficiently solved via PDA.  Let $y = Ax-b$ and $h$ be the indicator
function of the closed ball $B(0, \delta)$. Then the above problem can
be expressed as
\begin{equation}
    \label{eq:inverse2} \min_{x,y} g(x) + h(y)\quad \text{s.t.}\quad
Ax-y = b,
\end{equation} which is a particular case of \eqref{eq:main}. Again
the coordinate primal-dual method, in contrast to the original PDA,
has the advantage of exploiting the structure of the matrix
$[\begin{array}{c|c} A & -I \end{array}]$ which allows one to use larger
steps for faster convergence of the method.

There is another approach which we want to discuss. In many
applications we do not need to solve a regularized problem exactly, the
so-called early stopping may even help to obtain a better
solution. Theoretically, one can consider another regularized problem:
$ \min_x g(x)$ such that $Ax=b$.  Since during iteration we can easily
control the feasibility gap $\LMn{Ax^k-b}$, we do not need to converge
to a feasible solution $x^*$, where $Ax^*= b$, but stop somewhere
earlier. The only issue with such approach is that the system $Ax=b$
might be inconsistent (and due to the noise this is often the
case). Hence to be precise, we have to solve the following problem
\begin{equation}
    \label{eq:inverse_pr} \min_x g(x) \quad \text{s.t.}\quad x \in
\alphargmin_z \bigl\{f(z):=\frac 1 2 \LMn{Az-b}^2\bigr\}.
\end{equation} Obviously, the above constraint is equivalent to the
$A^TAx = A^Tb$.  Fortunately, we are able to show that our proposed
method (without any modification) in fact solves
\eqref{eq:inverse_pr}, so it does not need to work with $A^TA$ that
standard analysis of PDA requires. Notice that $A^TA$ is likely to be
less sparse and more ill-conditioned than $A$.  \bigskip

\section{Analysis}\label{sec:analysis}
We first introduce some notation. For any vector
$\omega = (\omega_1,\dots, \omega_p)\in \LMR^p_{++}$ we define the weighted norm by
$\LMn{x}_\omega^2 := \sum \omega_i \LMn{x_i}^2$ and the weighted proximal
operator $\LMprox_{g}^\omega$ by
\[
\LMprox_{g}^\omega := (\LMid + \LMDiag(\omega^{-1})\circ \partial g)^{-1} = (\LMDiag(\omega)
+ \partial g)^{-1}\circ \LMDiag(\omega).
\]
The weighted proximal operator has
the following characteristic property (prox-inequality):
\begin{equation}\label{prox_charact} \LMx= \LMprox_{g}^\omega z \quad
\Leftrightarrow \quad \LMlr{\LMDiag(\omega)(\LMx- z), ~x- \LMx} \geq g(\LMx) - g(x)
\quad \forall x\in \LMR^n.
\end{equation}
From this point onwards we will fix
\begin{equation}\label{eq:feas resid}
f(x) := \frac 1 2 \LMn{Ax-b}^2.
\end{equation}
Then $\nabla
f(x) = A^T(Ax-b)$ and its partial derivative corresponding to $i$-th
block is $\nabla_i f(x) = A_i^T(Ax-b)$. Let $\lambda = (\lambda_1, \dots,
\lambda_p)$, where $\lambda_i$ is the largest
eigenvalue of $A_i^T A_i$, that is $\lambda_i = \lambda_{\max}(A_i^TA_i)$. Then
the Lipschitz constant of the $i$-th partial gradient is $\LMn{A_i}^2 =
\lambda_i$.  By $U_i\colon \LMR^n \to \LMR^{n}$ we denote the projection
operator: $U_i x = (0,\dots, x_i, \dots, 0)$.
Since $f$ is quadratic, it follows that for any $x,y \in \LMR^n$
\begin{equation}
    \label{eq:quadr}
    f(y) - f(x) - \LMlr{\nabla f(x), y-x} = \frac 1 2 \LMn{A(x-y)}^2.
\end{equation}
We also have that for any $x, t\in \LMR^n$
\begin{equation}
    \label{eq:lipsch} f(x + U_i t) = f(x) + \LMlr{\nabla f(x), ~U_i t} +
\frac 1 2 \LMn{A_it_i}^2\leq f(x) + \LMlr{\nabla f(x), ~U_i t} +
\frac{\lambda_i}{2} \LMn{t_i}^2.
\end{equation}
Now since $i$ is a uniformly distributed random number
over $\{1,2,\dots,p\}$, from the above it follows
\begin{equation}
    \label{eq:exp} \LME[f(x+U_i t)]\leq f(x) + \frac{1}{p} \LMlr{\nabla f(x), ~t} +
\frac{1}{2p} \LMn{t}^2_{\lambda}.
\end{equation}

For our analysis it is more convenient to work with
Algorithm~\ref{alg:tseng} given below. It is equivalent to
Algorithm~\ref{alg:pd} when the
random variable $i$ for the blocks are the same, though this
might be not obvious at first
glance. We give a short proof of this fact. Notice also
that Algorithm~\ref{alg:tseng} is entirely primal. The
formulation of Algorithm~\ref{alg:tseng} with $p=1$ (not random!) is  related to the
Tseng proximal gradient method \cite{malitsky2017,tseng08} and
to its stochastic extension, the APPROX method \cite{fercoq2015accelerated}.
The proposed method requires stepsizes: $\sigma >0$ and
$\tau \in \LMR^p_{++}$. The necessary condition for convergence of
Algorithms~\ref{alg:pd} and \ref{alg:tseng}
is, as we will see, $\tau_i \sigma \LMn{A_i}^2 <
1$. We have strict inequality for the same reason that one needs
$\tau \sigma \LMn{A}^2 < 1$ in the original PDA.
\begin{algorithm}
  \begin{algorithmic}[1]
    \STATE Choose $x^0 \in \LMR^n$, $\sigma >0$,  $\tau\in \LMR^p_{++}$ and set
    $s^0=x^0$, $\theta_0=1$.
    \FOR{ $k=0,1,2,\dots$}
    \STATE $z^k= \theta_k x^k+ (1-\theta_k) s^k$
    \STATE $x^{k+1} = x^k$, $ s^{k+1}=z^k$
    \STATE Pick an index $i\in \{1,\dots, p\}$ uniformly at
    random.
    \STATE $x^{k+1}_{i} = \LMprox_{\frac{\tau_i}{p} g_i}(x^k_i -
    \frac{\tau_i \sigma}{p \theta_k} \nabla_i f(z^k))$
    \STATE $s^{k+1}_{i} =z^k_i+{p}{\theta_k}(x^{k+1}_i-x^k_i)$
    \STATE $\theta_{k+1} = \frac{1}{k+2}$
\ENDFOR
\end{algorithmic}
\caption{Coordinate primal-dual algorithm (equivalent form)}
\label{alg:tseng}
\end{algorithm}

Let $f_* = \min f$ and $S$ be the solution set of
\eqref{eq:main2}. Observe that if the linear system $Ax=b$ is
consistent, then $f_* = 0$ and $\alphargmin f = \{x\colon
Ax=b\}$. Otherwise, $f_*>0$ and $\alphargmin f = \{x\colon A^TAx=A^Tb\}$.
We will often use an important simple identity:
\begin{equation}
    \label{eq:id_f}
    f(x) - f(\LMx) = \frac 1 2 \LMn{A(x-\LMx)}^2 \qquad (\forall x\in \LMR^n)
    (\forall \LMx\in S).
\end{equation}

\begin{propos}
    If the index $i$ selected at iteration $k$ in step $4$ of Algorithm~\ref{alg:pd} is identical to the index $i$ selected at iteration $k$ in step $5$ of
    Algorithm \ref{alg:tseng}, then both algorithms with the same
    starting point $x^0$ generate the same sequence $(x^k)$.
\end{propos}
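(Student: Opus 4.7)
\bigskip

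\noindent\textbf{Proof plan.}

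The plan is to prove by induction on $k$ the two identities
\begin{equation*}
u^k \;=\; \sigma(Ax^k-b), \qquad y^k \;=\; \frac{\sigma}{\theta_k}\bigl(Az^k-b\bigr),
\end{equation*}
where $(x^k,u^k,y^k)$ are the iterates of Algorithm~\ref{alg:pd} and $(x^k,z^k,s^k,\theta_k)$ are the iterates of Algorithm~\ref{alg:tseng}, started from the same $x^0$ and choosing the same index $i$ at every iteration. Once these identities are in place, the arguments of the proximal mappings in step~$5$ of each algorithm coincide: using $\nabla_i f(z^k)=A_i^T(Az^k-b)$, the shift in Algorithm~\ref{alg:tseng} equals
\begin{equation*}
\frac{\tau_i\sigma}{p\,\theta_k}\,A_i^T(Az^k-b)
\;=\;\frac{\tau_i}{p}\,A_i^T\,y^k,
\end{equation*}
which is exactly the shift in Algorithm~\ref{alg:pd}. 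Since the proximal operator is the same in both algorithms and only the $i$-th block of $x$ is updated, the new $x^{k+1}$ coincides, completing the induction for $x$.

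For the base case, note that $\theta_0=1$ and $z^0=x^0$, so both identities reduce to $y^0=u^0=\sigma(Ax^0-b)$, which is precisely the initialization of Algorithm~\ref{alg:pd}. For the inductive step, I would first handle $u^k$: since only the $i$-th block of $x$ changes and $x^{k+1}-x^k=U_i t_i$, we get $Ax^{k+1}-b=Ax^k-b+A_it_i$, and the update $u^{k+1}=u^k+\sigma A_it_i$ together with the inductive hypothesis yields $u^{k+1}=\sigma(Ax^{k+1}-b)$.

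The more delicate part is the identity for $y^k$. Using $s^{k+1}=z^k+p\theta_k U_i t_i$ and $x^{k+1}=x^k+U_i t_i$, I would compute
\begin{equation*}
Az^{k+1}-b = \theta_{k+1}(Ax^k-b+A_it_i) + (1-\theta_{k+1})(Az^k-b+p\theta_k A_it_i),
\end{equation*}
and then, using the explicit form $\theta_k=1/(k+1)$, simplify the coefficient of $A_it_i$ via $(1-\theta_{k+1})p\theta_k = p\theta_{k+1}$, so it collapses to $(p+1)\theta_{k+1}$. Multiplying both sides by $\sigma/\theta_{k+1}$ and applying the inductive hypotheses together with the identity $\sigma(1-\theta_{k+1})/\theta_{k+1}=\sigma(k+1)=\sigma/\theta_k$ gives
\begin{equation*}
\frac{\sigma}{\theta_{k+1}}(Az^{k+1}-b) \;=\; u^k + y^k + \sigma(p+1)A_it_i \;=\; y^{k+1},
\end{equation*}
which is precisely the update in Algorithm~\ref{alg:pd}. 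The main obstacle is purely bookkeeping: verifying that the explicit choice $\theta_k=1/(k+1)$ produces exactly the algebraic cancellations which make the two coefficients match (notably the coefficient $p+1$ of $A_it_i$ in the $y$-update); no functional-analytic difficulty is involved.
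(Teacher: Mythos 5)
Your proposal is correct and follows essentially the same route as the paper's own proof: induction on the identities $u^k=\sigma(Ax^k-b)$ and $y^k=\frac{\sigma}{\theta_k}(Az^k-b)$, with the same algebraic collapse of the coefficient of $A_it_i$ to $(p+1)\theta_{k+1}$ and the resulting match of the proximal arguments. The only difference is that you spell out the cancellations $(1-\theta_{k+1})p\theta_k=p\theta_{k+1}$ and $(1-\theta_{k+1})/\theta_{k+1}=1/\theta_k$ explicitly, which the paper leaves implicit.
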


\begin{proof} We show that from Algorithm~\ref{alg:tseng} one can
recover all iterates of Algorithm~\ref{alg:pd} by setting $y^k =
\frac{\sigma}{\theta_k}(Az^k-b)$, $u^k = \sigma (Ax^k-b)$. Then the proposition
follows, since with $\nabla_i f(x) = A_i^T(Ax-b)$ we
have
\begin{equation*}
    x^{k+1}_i = \LMprox_{\frac{\tau_i}{p} g_i}(x^k_i - \frac{\tau_i
    \sigma}{p \theta_k} A^T_i(Az^k-b)) = \LMprox_{\frac{\tau_i}{p} g_i}(x^k_i - \frac{\tau_i}{p}
    A^T_iy^k).
\end{equation*}
Evidently, for $k = 0$, one has $y^0 = \sigma (Az^0-b) =
\sigma (Ax^0-b) = u^0$. Assume it holds for some $k\geq 0$.  By step~3 in
Algorithm~\ref{alg:tseng}, we have
\begin{equation*}
    Az^{k+1} = \theta_{k+1} Ax^{k+1} +
    (1-\theta_{k+1})As^{k+1} = \theta_{k+1}(Ax^{k} + A_it_i) +
    (1-\theta_{k+1})(Az^k + p\theta_kA_i t_i),
\end{equation*}
where we have used that $Ax^{k+1} = Ax^k + A_it_i$. Hence,
\begin{equation*}
    \frac{\sigma}{\theta_{k+1}}(Az^{k+1}-b) = \sigma(Ax^k-b) +
    \frac{\sigma}{\theta_k}(Az^k-b) + \sigma(p+1)A_i t_i = u^k + y^k + \sigma (p+1)A_i
    t_i,
\end{equation*}
thus $y^{k+1} = \frac{\sigma}{\theta_{k+1}}(Az^{k+1}-b)$. Finally,
$\sigma(Ax^{k+1}-b) = u^k + \sigma A_it_i = u^{k+1}$.
\qed
\end{proof}

\bigskip

We are now ready to state our main result. Since the iterates given by
Algorithm~\ref{alg:tseng} are random variables, our convergence result
is of a probabilistic nature. Notice also that equalities and
inequalities involving random variables should be always understood to
hold almost surely, even if the latter is not explicitly stated.
\begin{theorem}\label{th:main}
    Let $(x^k), (s^k)$ be given by Algorithm~\ref{alg:tseng},
    $\tau_i \sigma \LMn{A_i}^2 <1$ for all $i=1,\dots, p$, and $S$ be the
    solution set of \eqref{eq:main2}. Then
\begin{enumerate}[(i)]
    \item\label{th:main i} If there exists a Lagrange multiplier for problem
    \eqref{eq:main2}, then $(x^k)$ and $(s^k)$ converge a.s.\ to a
    solution of \eqref{eq:main2} and 
    $f(x^k)-f_* = o(1/k)$, $f(s^k)-f_* = O(1/k^2)$ a.s.
for the feasibility residual \eqref{eq:feas resid}.
  \item\label{th:main ii} If $S$ is a bounded set and $g$ is bounded
    from below, then almost surely all limit points of $(s^k)$ belong
    to $S$ and $f(s^k)-f_* = o(1/k)$.
    \end{enumerate}
\end{theorem}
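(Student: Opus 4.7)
The plan is to derive a stochastic Lyapunov inequality by combining the prox-inequality \eqref{prox_charact} with the descent estimate \eqref{eq:lipsch}, take conditional expectation over the random block index, and close the argument via the Robbins--Siegmund almost-supermartingale theorem in part~(\ref{th:main i}) and by a soft compactness argument in part~(\ref{th:main ii}). Conceptually this mirrors the APPROX analysis of \cite{fercoq2015accelerated}, which the authors acknowledge as their conceptual source.

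Fix $\LMx \in S$. I would first apply \eqref{prox_charact} to step~6 of Algorithm~\ref{alg:tseng} at the argument $x=\LMx_i$, producing the coordinate-wise bound
\[
\tfrac{p}{\tau_i}\LMlr{x^{k+1}_i-x^k_i,\,\LMx_i-x^{k+1}_i}+\tfrac{\sigma}{\theta_k}\LMlr{\nabla_i f(z^k),\,\LMx_i-x^{k+1}_i}\ge g_i(x^{k+1}_i)-g_i(\LMx_i).
\]
Separately, \eqref{eq:lipsch} evaluated at $z^k$ along the block step $p\theta_k U_i(x^{k+1}_i-x^k_i)$ -- which by step~7 is exactly $s^{k+1}-z^k$ -- yields
\[
f(s^{k+1})\le f(z^k)+p\theta_k\LMlr{\nabla_i f(z^k),\,x^{k+1}_i-x^k_i}+\tfrac{(p\theta_k)^2\lambda_i}{2}\LMn{x^{k+1}_i-x^k_i}^2.
\]
After scaling the latter by $\sigma/\theta_k^2$, adding to the former, expanding the cross term $\LMlr{x^{k+1}_i-x^k_i,\,\LMx_i-x^{k+1}_i}$ via the three-point identity in the $\tau^{-1}$-weighted norm, and using convexity of $f$ with $z^k=\theta_k x^k+(1-\theta_k)s^k$ together with \eqref{eq:quadr} to dispose of $\LMlr{\nabla f(z^k),\,\LMx-z^k}$, the gradient contributions cancel. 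The stepsize condition $\tau_i\sigma\LMn{A_i}^2<1$ is precisely what makes the residual $\tfrac{(p\theta_k)^2\lambda_i\sigma}{2\theta_k^2}\LMn{x^{k+1}_i-x^k_i}^2$ strictly dominated by $\tfrac{p^2}{2\tau_i}\LMn{x^{k+1}_i-x^k_i}^2$, so the net coefficient on $\LMn{x^{k+1}-x^k}^2$ is strictly negative.

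Next I would take conditional expectation $\LME[\,\cdot\mid\mathcal{F}_k]$ over the uniformly chosen index $i$, using \eqref{eq:exp} as the template for passing from coordinate increments to full-vector increments. With $\theta_k=1/(k+1)$, the identity $(1-\theta_{k+1})/\theta_{k+1}^2=1/\theta_k^2$ lets the $f$- and $g$-gaps telescope cleanly, producing a one-step inequality of the form
\[
\LME[V_{k+1}\mid\mathcal{F}_k]+R_k\le V_k,
\]
where $V_k$ is a suitable combination of $\tfrac{\sigma(1-\theta_k)}{\theta_k^2}(f(s^k)-f_*)$, a $\theta_k^{-1}$-weighted gap of $g$ evaluated at $s^k$, and $\tfrac{1}{2}\LMn{x^k-\LMx}_{\tau^{-1}}^2$, while $R_k\ge 0$ collects the strict slack from the previous paragraph.

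For part~(\ref{th:main i}), the existence of a Lagrange multiplier $y^*$ gives $-A^Ty^*\in\partial g(\LMx)$, hence by \eqref{eq:id_f},
\[
g(x)-g(\LMx)\ge-\LMn{y^*}\sqrt{2(f(x)-f_*)}\quad\forall x.
\]
A Young inequality absorbs the potentially negative $g$-gap at $s^k$ into the $f$-gap at $s^k$ and a summable tail, making the adjusted Lyapunov quantity nonnegative; Robbins--Siegmund then delivers a.s.\ convergence of $V_k$, summability of $R_k$, and hence the rates $f(s^k)-f_*=O(1/k^2)$ and $f(x^k)-f_*=o(1/k)$ together with a.s.\ boundedness of $\LMn{x^k-\LMx}_{\tau^{-1}}$. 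Combining Fej\'er monotonicity with respect to a countable dense subset of $S$ with the observation that every cluster point of $(x^k)$ is feasible (as $f(x^k)\to f_*$) upgrades this to a.s.\ convergence of $(x^k)$ and $(s^k)$ to a single point of $S$. For part~(\ref{th:main ii}), without a Lagrange multiplier the $g$-gap at $s^k$ cannot be signed; instead I would use $g\ge\inf g>-\infty$ to bound $V_k$ from below and extract $f(s^k)-f_*=o(1/k)$ from summability of $R_k$, and boundedness of $S$ together with $f(s^k)\to f_*$ then forces every cluster point of $(s^k)$ to lie in $S$. The main obstacle is the bookkeeping in the expectation step: lining up the $\theta_k$, $\sigma$, $\tau_i$ coefficients so that APPROX-style telescoping produces a genuinely monotone quantity, and simultaneously leaving enough slack in $V_k$ so that part~(\ref{th:main ii}) can still be handled without a sign on the $g$-gap.
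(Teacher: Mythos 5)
Your overall strategy is the same as the paper's (prox-inequality for the block update, the quadratic descent estimate \eqref{eq:lipsch} along $s^{k+1}-z^k$, conditional expectation over the uniform index, APPROX-style telescoping with $\theta_k=1/(k+1)$, the Lagrange-multiplier lower bound plus the quadratic-in-$t$/Young step, and the supermartingale theorem for part (i)), but the ``bookkeeping obstacle'' you flag at the end is not a detail — it is the one device that makes the recursion close, and your $V_k$ as specified would not telescope. Since $g$ is nonlinear and only one block of $s^{k+1}$ differs from $z^k$, $\LME_k[g(s^{k+1})]$ cannot be expressed through $g(s^k)$ and $g(\LMhx^{k+1})$; the recursion only closes for the convex-combination surrogate $\LMhg_k=\sum_{j}\beta_k^j g(x^j)\geq g(s^k)$ built from Lemma~\ref{l:sk}, which obeys the linear update \eqref{eq:gk} $\LME_k[\LMhg_{k+1}]=\theta_k g(\LMhx^{k+1})+(1-\theta_k)\LMhg_k$. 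The paper's Lyapunov quantity is $V_k(x^*)=\frac{p^2}{2}\LMn{x^k-x^*}^2_{\tau^{-1}}+k(\hat F_k-g_*)$ with $\hat F_k=\LMhg_k+\sigma k(f(s^k)-f_*)$, and the inequality $\LMhg_k\geq g(s^k)$ points in exactly the right direction for the multiplier-based lower bound \eqref{eq:dual}--\eqref{eq:est_for_Fk}. A smaller remark on (i): ``Fej\'er monotonicity'' is not literally available because $V_k$ carries the drifting term $k(\hat F_k-g_*)$ whose limit is unknown; the paper replaces it by the two-cluster-point comparison of $\lim V_k(x')$ and $\lim V_k(x'')$ (your countable-dense-subset device is a legitimate way to handle the $x^*$-dependent null sets, but it must be run on $V_k$, not on $\LMn{x^k-x^*}$ alone).

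Part (ii) of your plan has a genuine gap. With only $g\geq -l$ the term $k(\hat F_k-g_*)$ is bounded below merely by $-k(l+g_*)$, which is not a uniform-in-$k$ bound, so the Robbins--Siegmund/supermartingale route you propose does not apply; the paper instead takes total expectations and iterates \eqref{eq:10} to get \eqref{eq:full0}--\eqref{eq:full2}, yielding only an $O(1/k)$ bound at first. Two further steps you omit are essential and nontrivial: (a) existence of cluster points of $(s^k)$ does not follow from boundedness of $S$; the paper proves pointwise a.s.\ boundedness of $(s^k)$ by an induction that combines the bounded level sets of $\varphi=\max\{g-g_*,f-f_*\}$ on the index set where $g(s^i)<g_*$ with the a.s.\ boundedness of $x^j$ and $x^j-x^{j-1}$ (from \eqref{eq:full}) off that index set; and (b) feasibility of cluster points (from $f(s^k)\to f_*$) does not give optimality; one must additionally rule out $\limsup_k g(s^k)>g_*$ by a contradiction argument in \eqref{eq:full2}, and only after $g(s^k)\to g_*$ a.s.\ can the rate be upgraded to $f(s^k)-f_*=o(1/k)$ — it does not come from summability of your $R_k$, which is exactly the quantity one loses control of without a Lagrange multiplier.
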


The proof of Theorem  \ref{th:main} is based on several simple lemmas which we
establish first.
The next lemma uses the following notation for the
full update, $\LMhx^{k+1}$,  of $x^k$ in the $k$-th iteration of
Algorithm~\ref{alg:tseng}, namely
\begin{equation}\label{eq:xhat} \LMhx^{k+1} = \LMprox^{\tau^{-1}}_{g/p}
\left(x^k - \frac{k+1}{p} \sigma
\LMDiag(\tau) \nabla f(z^k)\right).
\end{equation}

\begin{lemma}\label{prox-ineq}
 For any fixed $x\in\mathbb{R}^n$ and any $k\in\mathbb{N}$ the following 
inequality holds:
        \begin{multline}
        \label{eq:3} \frac{p}{2}\LMn{\LMhx^{k+1}-x^k}^2_{\tau^{-1}} +
\frac{\sigma}{\theta_k} \LMlr{\nabla f(z^k),~ \LMhx^{k+1}-x}+
g(\LMhx^{k+1})-g(x) \\ \leq \frac{p}{2}\LMn{x^{k}-x}^2_{\tau^{-1}} -
\frac{p}{2}\LMn{\LMhx^{k+1}-x}^2_{\tau^{-1}}.
\end{multline}
\end{lemma}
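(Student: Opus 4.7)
The proof is essentially a direct application of the characteristic property \eqref{prox_charact} of the weighted proximal operator followed by a three-point (cosine) identity, so my plan is to unwind the definition of $\LMhx^{k+1}$ carefully and then rearrange.

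First I would apply \eqref{prox_charact} to the full update \eqref{eq:xhat}. Here the weight is $\omega = \tau^{-1}$ (componentwise), the function being proxed is $g/p$, and the argument is
\[
z := x^k - \frac{\sigma}{p\theta_k}\LMDiag(\tau)\nabla f(z^k),
\]
since $(k+1)=1/\theta_k$. Writing $\LMDiag(\tau^{-1})(\LMhx^{k+1}-z)=\LMDiag(\tau^{-1})(\LMhx^{k+1}-x^k)+\frac{\sigma}{p\theta_k}\nabla f(z^k)$ and multiplying the resulting prox-inequality through by $p$, one obtains, for every $x\in\LMR^n$,
\[
p\LMlr{\LMDiag(\tau^{-1})(\LMhx^{k+1}-x^k),\, x-\LMhx^{k+1}} + \frac{\sigma}{\theta_k}\LMlr{\nabla f(z^k),\, x-\LMhx^{k+1}} \;\geq\; g(\LMhx^{k+1})-g(x).
\]

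Next I would apply the standard three-point identity in the weighted inner product induced by $\tau^{-1}$. With $a=\LMhx^{k+1}$, $b=x^k$, $c=x$, and using $2\LMlr{u,v}_{\tau^{-1}}=\LMn{u}^2_{\tau^{-1}}+\LMn{v}^2_{\tau^{-1}}-\LMn{u-v}^2_{\tau^{-1}}$ applied to $u=a-b$, $v=c-a$, one gets
\[
\LMlr{\LMDiag(\tau^{-1})(\LMhx^{k+1}-x^k),\, x-\LMhx^{k+1}} = \tfrac12\bigl(\LMn{x-x^k}^2_{\tau^{-1}} - \LMn{\LMhx^{k+1}-x^k}^2_{\tau^{-1}} - \LMn{x-\LMhx^{k+1}}^2_{\tau^{-1}}\bigr).
\]

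Substituting this into the previous display and moving the $\frac{p}{2}\LMn{\LMhx^{k+1}-x^k}^2_{\tau^{-1}}$ and $\frac{p}{2}\LMn{\LMhx^{k+1}-x}^2_{\tau^{-1}}$ terms to the appropriate side yields exactly \eqref{eq:3}. There is no real obstacle; the only thing to be careful about is bookkeeping the factor of $p$ coming from proxing $g/p$ rather than $g$, and keeping the weighted inner product $\LMlr{\cdot,\cdot}_{\tau^{-1}}$ (which is what the diagonal matrix $\LMDiag(\tau^{-1})$ encodes) consistent with the weighted norm $\LMn{\cdot}_{\tau^{-1}}$ used in the statement.
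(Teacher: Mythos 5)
Your proof is correct and takes essentially the same route as the paper: apply the prox-characterization \eqref{prox_charact} to \eqref{eq:xhat} with weight $\tau^{-1}$ and function $g/p$ (using $k+1=1/\theta_k$), multiply through by $p$, and conclude with the three-point identity in the $\tau^{-1}$-weighted norm, which is exactly the identity the paper invokes. One small slip in your justification: with $u=\LMhx^{k+1}-x^k$ and $v=x-\LMhx^{k+1}$ the identity you need is $2\LMlr{u,v}_{\tau^{-1}}=\LMn{u+v}^2_{\tau^{-1}}-\LMn{u}^2_{\tau^{-1}}-\LMn{v}^2_{\tau^{-1}}$ (so that $u+v=x-x^k$), not the form $2\LMlr{u,v}_{\tau^{-1}}=\LMn{u}^2_{\tau^{-1}}+\LMn{v}^2_{\tau^{-1}}-\LMn{u-v}^2_{\tau^{-1}}$ you quote, but the displayed three-point identity you actually substitute is the correct one, so the argument goes through unchanged.
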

    \begin{proof} By the prox-inequality~\eqref{prox_charact} with
    $\LMhx^{k+1}$ given by \eqref{eq:xhat}
    we have
\begin{equation}
        \label{eq:2} \LMlr{\LMDiag(\tau^{-1})(\LMhx^{k+1}-x^k), ~x-\LMhx^{k+1}}
+ \frac{\sigma}{p\theta_k} \LMlr{\nabla f(z^k),~x-\LMhx^{k+1}} \geq
\frac{1}{p}(g(\LMhx^{k+1}) -g(x)).
\end{equation}
The statement then follows from the identity
\[
 \LMlr{\LMDiag(\tau^{-1})(\LMhx^{k+1}-x^k), ~x-\LMhx^{k+1}} = \frac{1}{2}\LMn{x^{k}-x}^2_{\tau^{-1}} -
\frac{1}{2}\LMn{\LMhx^{k+1}-x}^2_{\tau^{-1}} - \frac{1}{2}\LMn{\LMhx^{k+1}-x^k}^2_{\tau^{-1}}.
\]
\qed
\end{proof}

 The next result provides a bound on the expectation of the residual at the $(k+1)$th iterate, conditioned on the $k$th iterate,
 which we denote by $\LME_k$.
\begin{lemma}\label{l:exp}
For any $x^*\in S$ 
\begin{multline}
    \label{eq:1} \frac{1}{\theta_{k}^2}\LME_k [f(s^{k+1})-f_*] \leq
\frac{1}{\theta_{k-1}^2}(f(s^k)-f_*) - (f(x^k)-f_*)+ \frac{1}{\theta_k}
\LMlr{\nabla f(z^k), ~\LMhx^{k+1}-x^*} \\ + \frac{p}{2}
\LMn{\LMhx^{k+1}-x^k}^2_\lambda.
    \end{multline}
\end{lemma}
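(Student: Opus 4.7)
The strategy is to combine the block-Lipschitz inequality~\eqref{eq:lipsch} for the quadratic $f$ with the convex-combination structure of $z^k = \theta_k x^k + (1-\theta_k)s^k$, so that the lemma reduces to an elementary scalar identity in $\theta_k$.

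First, by steps~4 and~7 of Algorithm~\ref{alg:tseng} we have $s^{k+1} = z^k + U_i\bigl(p\theta_k(\LMhx^{k+1}-x^k)\bigr)$, since $s^{k+1}$ is initialized to $z^k$, only the $i$-th block is updated, and $x^{k+1}_i = \LMhx^{k+1}_i$ when the index $i$ is chosen. Applying~\eqref{eq:lipsch} with $t = p\theta_k(\LMhx^{k+1}-x^k)$ and taking the conditional expectation as in~\eqref{eq:exp} yields
\begin{equation*}
\LME_k[f(s^{k+1})] \leq f(z^k) + \theta_k\LMlr{\nabla f(z^k),\,\LMhx^{k+1}-x^k} + \frac{p\theta_k^2}{2}\LMn{\LMhx^{k+1}-x^k}_\lambda^2.
\end{equation*}
Dividing by $\theta_k^2$, subtracting $f_*/\theta_k^2$, and splitting the inner product via $\LMhx^{k+1}-x^k = (\LMhx^{k+1}-x^*)-(x^k-x^*)$ reduces the lemma to the scalar bound
\begin{equation*}
\frac{1}{\theta_k^2}(f(z^k)-f_*) - \frac{1}{\theta_k}\LMlr{\nabla f(z^k),\,x^k-x^*} + (f(x^k)-f_*) \;\leq\; \frac{1}{\theta_{k-1}^2}(f(s^k)-f_*).
\end{equation*}

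To prove this reduced bound I use three identities valid for the quadratic $f$. Identity~\eqref{eq:quadr} applied to the pairs $(x^k,z^k)$ and $(x^*,z^k)$, together with~\eqref{eq:id_f}, gives $\LMlr{\nabla f(z^k),\,z^k-x^*} = 2(f(z^k)-f_*)$ and hence
\begin{equation*}
\LMlr{\nabla f(z^k),\,x^k-x^*} = f(x^k)+f(z^k)-2f_* - \tfrac12\LMn{A(x^k-z^k)}^2.
\end{equation*}
The standard convex-combination identity for a quadratic applied to $z^k = \theta_k x^k + (1-\theta_k)s^k$ yields
\begin{equation*}
f(z^k)-f_* = \theta_k(f(x^k)-f_*) + (1-\theta_k)(f(s^k)-f_*) - \tfrac{\theta_k(1-\theta_k)}{2}\LMn{A(x^k-s^k)}^2,
\end{equation*}
while $\LMn{A(x^k-z^k)}^2 = (1-\theta_k)^2\LMn{A(x^k-s^k)}^2$.

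Plugging these three identities into the left-hand side is routine algebra: the $(f(x^k)-f_*)$ coefficients cancel, the $\LMn{A(x^k-s^k)}^2$ contributions cancel, and the coefficient of $(f(s^k)-f_*)$ collapses to $(1-\theta_k)^2/\theta_k^2$. The recursion $\theta_k = 1/(k+1)$ makes $(1-\theta_k)/\theta_k = k = 1/\theta_{k-1}$, so the reduced bound in fact holds with equality. I expect the only delicate point to be verifying that these coefficient cancellations line up exactly and depend on the specific choice of $\theta_k$; the rest is a direct consequence of $f$ being a squared affine function.
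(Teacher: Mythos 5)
Your proof is correct and takes essentially the same route as the paper: the same conditional-expectation bound obtained from \eqref{eq:exp} (the paper's \eqref{eq:est_exp}), the same ingredients \eqref{eq:quadr}, \eqref{eq:id_f}, the convex-combination identity for the quadratic, and the relation $(1-\theta_k)/\theta_k=1/\theta_{k-1}$. You merely reorganize the algebra, expanding $\LMlr{\nabla f(z^k),\,x^k-x^*}$ directly rather than $f(s^k)-f(z^k)$ and $f_*-f(z^k)$ as in \eqref{bi2:3}--\eqref{eq:5}, and in both treatments the deterministic part indeed holds with equality.
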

\begin{proof} First, by \eqref{eq:exp}
    \begin{equation}
        \label{eq:est_exp} \LME_k[f(s^{k+1})] \leq f(z^k) + \theta_k
\LMlr{\nabla f(z^k),~ \LMhx^{k+1}-x^k} + \theta_k^2 \frac{p}{2}
\LMn{\LMhx^{k+1}-x^k}^2_\lambda.
    \end{equation} Since $f$ is quadratic \eqref{eq:feas resid}, by
    \eqref{eq:quadr} we have
 \begin{multline}\label{bi2:3} f(s^k)-f(z^k) = \LMlr{\nabla
f(z^k),s^k-z^k} + \frac 1 2 \LMn{A(s^k-z^k)}^2\\ = \frac{\theta_k}{1-\theta_k}
\LMlr{\nabla f(z^k),z^k-x^k} + \frac{1}{2} \LMn{A(s^k-z^k)}^2.
\end{multline}
By \eqref{eq:quadr} and  $\|\alpha a + (1-\alpha)b\|^2 = \alpha \|a\|^2 +
(1-\alpha)\|b\|^2 - \alpha(1-\alpha)\|a-b\|^2$, we have
 \begin{multline}\label{bi2:4} f_* - f(z^k) = f(x^*)-f(z^k) = \LMlr{\nabla f(z^k),
x^*-z^k} + \frac 1 2 \LMn{A(z^k-x^*)}^2 \\ = \LMlr{\nabla f(z^k), x^*-z^k}
+ \theta_k (f(x^k)-f_*) + (1-\theta_k)(f(s^k) -f_*) -
\frac 1 2 \theta_k(1-\theta_k)\LMn{A(x^k-s^k)}^2.
\end{multline}
 Notice that $x^k-s^k =
\frac{1}{\theta_k}(s^k-z^k)$. Hence, summation of
$\frac{1-\theta_k}{\theta_k}\eqref{bi2:3}$ and \eqref{bi2:4} yields
\begin{equation}
    \label{eq:4} \frac{1-\theta_k}{\theta_k} f(s^k) - \frac{1}{\theta_k} f(z^k)
+ f_* = \LMlr{\nabla f(z^k),x^*-x^k} + \theta_k (f(x^k)-f_*) +
(1-\theta_k)(f(s^k)-f_*),
\end{equation} from which we conclude
\begin{equation}
    \label{eq:5} \frac{1}{\theta_k^2} f(z^k) + \frac 1 \theta_k \LMlr{\nabla
f(z^k),x^*-x^k} =
\frac{(1-\theta_k)^2}{\theta_k^2}f(s^k)+\frac{2-\theta_k}{\theta_k}f_* -
(f(x^k)-f_*).
\end{equation}
Now summing up \eqref{eq:est_exp}
multiplied by $\frac{1}{\theta_k^2}$ and \eqref{eq:5}, and using the
identity $\frac{1-\theta_k}{\theta_k}=\frac{1}{\theta_{k-1}}$, we obtain
\begin{multline}
    \label{eq:6} \frac{1}{\theta_{k}^2}\LME_k[f(s^{k+1})-f_*] \leq
\frac{1}{\theta_{k-1}^2}(f(s^k)-f_*) - (f(x^k)-f_*)+ \frac{1}{\theta_k}
\LMlr{\nabla f(z^k), \LMhx^{k+1}-x^*} \\ + \frac{p}{2}
\LMn{\LMhx^{k+1}-x^k}^2_\lambda.
\end{multline}
\qed
\end{proof}

The conditional expectations have the following useful representations.
Expanding the expectation
    \begin{multline*}
        \LME_k [\LMn{x^{k+1}-x}^2_{\tau^{-1}}] = \sum_{i=1}^p\tau_i^{-1} \LME_k[\LMn{x^{k+1}_i-x_i}^2] \\
= \sum_{i=1}^p \tau_i^{-1}\bigl(\frac 1 p \LMn{\LMhx^{k+1}_i-x}^2+
\frac{p-1}{p} \LMn{\LMhx^{k}_i-x}^2\bigr) = \frac{1}{p} \LMn{\LMhx^{k+1}-x}^2_{\tau^{-1}} +
\frac{p-1}{p} \LMn{x^{k}-x}^2_{\tau^{-1}}.
    \end{multline*}
This yields
\begin{equation}\label{eq:xk}
\LME_k\Bigl[\frac{p^2}{2}\LMn{x^{k+1}-x}^2_{\tau^{-1}}\Bigr] =
\frac{p}{2}\LMn{\LMhx^{k+1}-x}^2_{\tau^{-1}} +
\frac{p^2-p}{2}\LMn{x^k-x}^2_{\tau^{-1}}.
\end{equation}
Another characterization we use follows similarly, namely
    \begin{equation*}
        \LME_k[g(x^{k+1})] = \sum_{i=1}^p\bigl( \frac{1}{p}
g_i(\LMhx_i^{k+1}) + \frac{p-1}{p}g_i(x^k_i)\bigr) = \frac{1}{p}
g(\LMhx^{k+1})+\frac{p-1}{p} g(x^k)
\end{equation*}
which gives
\begin{equation}\label{eq:g}
\LME_k [g(x^{k+1})] = \frac{1}{p}
g(\LMhx^{k+1}) + \left(1-\frac{1}{p}\right)g(x^k).
\end{equation}

The next technical lemma is the last of the necessary preparations before proceding to
the proof of the main theorem.
\begin{lemma}
    \label{l:sk} The identity $s^{k+1} = \sum_{j=0}^{k+1} \beta^j_{k+1} x^{j}$ holds where
the coefficients $(\beta_{k+1}^j)_{j=0}^{k+1}$ are nonnegative and sum to $1$.
In particular,
    \begin{equation}
        \label{eq:recurs} \beta_{k+1}^j =
        \begin{cases} (1-\theta_k)\beta^j_{k}, & j = 0,\dots, k-1,\\
p\theta_{k-1}(1-\theta_k) -(p-1)\theta_k, & j =k,\\ p\theta_k, & j = k+1.
        \end{cases}
    \end{equation} and $\beta_{k+1}^k + (p-1)\theta_k = (1-\theta_k)\beta_{k}^k$.
\end{lemma}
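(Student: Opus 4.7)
The plan is to prove the identity by induction on $k$, using a clean vector-form update for $s^{k+1}$ as the main lever.

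First, I would establish the vector identity
\[
s^{k+1} = z^k + p\theta_k(x^{k+1} - x^k).
\]
Although steps 6 and 7 of Algorithm \ref{alg:tseng} only modify block $i$, the differences $s^{k+1}-z^k$ and $x^{k+1}-x^k$ both vanish outside block $i$, so the identity can be checked componentwise: block $i$ is exactly step 7, while for blocks $j\neq i$ both sides equal $z^k_j$ (since $x^{k+1}_j - x^k_j = 0$). Substituting the definition $z^k = \theta_k x^k + (1-\theta_k) s^k$ from step 3 yields the working form
\[
s^{k+1} = (1-\theta_k)\, s^k - (p-1)\theta_k\, x^k + p\theta_k\, x^{k+1}.
\]

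Second, I set up the induction. The base case is $s^0 = x^0$, i.e., $\beta_0^0 = 1$. For the inductive step, assume $s^k = \sum_{j=0}^k \beta^j_k x^j$ with coefficients summing to $1$ and (for $k\geq 1$) $\beta^k_k = p\theta_{k-1}$ from the third case of \eqref{eq:recurs} read at the previous iteration. Plugging the inductive expression into the working form and collecting coefficients of $x^j$, one obtains $(1-\theta_k)\beta^j_k$ for $j=0,\dots,k-1$; the value $p\theta_k$ for $j=k+1$; and for $j=k$,
\[
(1-\theta_k)\beta^k_k - (p-1)\theta_k = p\theta_{k-1}(1-\theta_k) - (p-1)\theta_k,
\]
where the second equality uses $\beta^k_k = p\theta_{k-1}$. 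These are exactly the three cases of \eqref{eq:recurs}, and this same computation establishes the supplementary identity $\beta^k_{k+1} + (p-1)\theta_k = (1-\theta_k)\beta^k_k$ for free.

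Third, I verify the normalization by summing the new coefficients and invoking $\sum_{j=0}^k \beta^j_k = 1$:
\[
(1-\theta_k)\cdot 1 \;-\; (p-1)\theta_k \;+\; p\theta_k \;=\; 1-\theta_k+\theta_k \;=\; 1.
\]
There is no serious obstacle in this proof; the only substantive step is recognizing the vector identity $s^{k+1}=z^k+p\theta_k(x^{k+1}-x^k)$, after which everything is bookkeeping. Nonnegativity, if one wants it, reduces to noting that with $\theta_{k-1}=1/(k+1)$ and $\theta_k=1/(k+2)$ the middle coefficient simplifies to $1/(k+2)>0$, the top one is $p\theta_k>0$, and the rest inherit their sign through the $(1-\theta_k)$ rescaling.
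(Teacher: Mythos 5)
Your induction is the right mechanism and, for the parts it actually proves, it is correct and is essentially the argument the paper leaves implicit (the paper's proof is a one-line pointer to Lemma~2 of Fercoq--Richt\'arik): the vector identity $s^{k+1}=z^k+p\theta_k(x^{k+1}-x^k)$, hence $s^{k+1}=(1-\theta_k)s^k-(p-1)\theta_k x^k+p\theta_k x^{k+1}$, is the correct lever, and from it your bookkeeping yields the recursion \eqref{eq:recurs}, the supplementary identity $\beta_{k+1}^k+(p-1)\theta_k=(1-\theta_k)\beta_k^k$, and $\sum_{j=0}^{k+1}\beta_{k+1}^j=1$ exactly as stated.

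The genuine gap is the nonnegativity claim. Your closing remark that ``the rest inherit their sign through the $(1-\theta_k)$ rescaling'' presupposes that the coefficients being rescaled are already nonnegative, and this fails at the very first step: with the initialization of Algorithm~\ref{alg:tseng}, namely $\theta_0=1$ and $s^0=x^0$, the working form gives $s^1=(1-p)x^0+p\,x^1$, so $\beta_1^0=1-p<0$ whenever $p>1$, and the first case of \eqref{eq:recurs} then propagates this negativity forever: $\beta_k^0=(1-\theta_{k-1})\cdots(1-\theta_1)(1-p)=(1-p)/k<0$ for all $k\ge 1$. (The other coefficients are fine: for $k\ge 1$ the middle coefficient equals $1/(k+1)$ and the top one $p/(k+1)$ --- note also your values $\theta_{k-1}=1/(k+1)$, $\theta_k=1/(k+2)$ are shifted by one, since here $\theta_k=1/(k+1)$.) So nonnegativity cannot be obtained by your argument, and in fact, as stated, it does not hold for $p>1$ with $\theta_0=1$; this is a delicacy inherited from the APPROX lemma being cited, which secures nonnegativity only under a restriction of the type $\theta_0\le 1/p$ (there $\theta_0=\tau/n$), precisely so that the analogue of $\beta_1^0=1-p\theta_0$ is nonnegative. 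A complete proof must either impose such a restriction on $\theta_0$ (which changes the algorithm and its equivalence with Algorithm~\ref{alg:pd}) or otherwise account for the single negative weight on $x^0$; your write-up, like the paper's citation, passes over this point.
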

\begin{proof} It is easy to prove by induction.  For the reference see
    Lemma~2 in \cite{fercoq2015accelerated}.
    \qed
\end{proof}

The proof of Theorem \ref{th:main} consists of three parts. The first contains an important
estimation, derived from previous lemmas. In the second and third
parts we respectively and independently prove \eqref{th:main i} and
\eqref{th:main ii}.  The condition in \eqref{th:main ii} is not
restrictive and often can be checked in advance without any
effort. In contrast, verifying strong duality (existence of Lagrange multipliers) is usually very difficult.

\begin{proof}[Theorem \ref{th:main}] Since $\tau_i \sigma \LMn{A_i}^2< 1$ for all $i=1,\dots,p$,
there exists $\LMe > 0 $ such that $\tau_i^{-1}-\sigma \LMn{A_i}^2 \geq
\LMe$. This yields for all $x \in \LMR^n$, $\LMn{x}_{\tau^{-1}}^2- \sigma
\LMn{x}_{\lambda}^2 \geq \LMn{x}_\LMe^2$.

By convexity of $g$,
\begin{equation*}
g(s^k) = g(\sum_{j=0}^k \beta_k^j x^j) \leq  \sum_{j=0}^k \beta_k^j  g(x^j) =: \LMhg_k.
\end{equation*}
Let  $\hat F_k:= \LMhg_k + \sigma k (f(s^k)-f_*) \geq \LMhg_k + \sigma k (f(s^k)-f_*)$.

By Lemma~\ref{l:sk} and \eqref{eq:g}, it follows
    \begin{align} \LME_k[\hat g_{k+1}] & =
\LME_k[\beta_{k+1}^{k+1}g(\LMhx^{k+1})] +\sum_{j=0}^k \beta_{k+1}^j g(x^j) \\ &
=\theta_k g(\LMhx^{k+1}) + ((p-1)\theta_k+ \beta_{k+1}^k)g(x^k) +
\sum_{j=0}^{k-1}\beta_{k+1}^j g(x^j) \\ & = \theta_k g(\LMhx^{k+1}) +
(1-\theta_k)\beta_{k}^k g(x^k) + (1-\theta_k)\sum_{j=0}^{k-1}\beta_k^jg(x^j)\\ & =\theta_k
g(\LMhx^{k+1}) +(1-\theta_k)\sum_{j=0}^{k}\beta_k^jg(x^j) =  \theta_k
g(\LMhx^{k+1}) + (1-\theta_k)\LMhg_k. \label{eq:gk}
    \end{align}
Let $x^*\in S$. Setting $x=x^*$ in \eqref{eq:3} and adding to $\sigma$ times \eqref{eq:1} yields
\begin{align}
  \frac{\sigma}{\theta_{k}^2}\LME_k[f(s^{k+1})-f_*] \leq &
 \frac{\sigma}{\theta_{k-1}^2}(f(s^k)-f_*) - \sigma (f(x^k)-f_*) + \frac{p}{2}\LMn{x^{k}-x^*}^2_{\tau^{-1}} \\ &
\!\!\!\!\!\!\!\!\!\! - \frac{p}{2}\LMn{\LMhx^{k+1}-x^*}^2_{\tau^{-1}} - \frac{p}{2} \LMn{\LMhx^{k+1}-x^k}^2_\LMe - (g(\LMhx^{k+1})-g_*). \label{eq:fk}
\end{align}
Summing up \eqref{eq:xk} with $x=x^*$, \eqref{eq:gk} multiplied by $1/\theta_k = (k+1)$, and \eqref{eq:fk}, we obtain
\begin{multline}
    \label{eq:10}
    \LME_k\Bigl[\frac{p^2}{2}\LMn{x^{k+1}-x^*}^2_{\tau^{-1}} + (k+1)(\hat F_{k+1} - g_*)\Bigr] \leq
\frac{p^2}{2}\LMn{x^{k}-x^*}^2_{\tau^{-1}} + k (\hat F_{k}-g_*) \\ -
\sigma (f(x^k)-f_*)- \frac{p}{2} \LMn{\hat x^{k+1}-x^k}^2_\LMe.
\end{multline}
If the term inside the expectation in \eqref{eq:10},
$V_k(x^*):=\frac{p^2}{2}\LMn{x^{k}-x^*}^2_{\tau^{-1}} + k (\hat
F_{k}-g_*)$, were nonnegative, then one could apply the supermartingale theorem~\cite{Bertsekas:1989} to obtain almost sure
convergence directly.
In our case this term need not be nonnegative, however it suffices to have $V_k(x^*)$
bounded from below.  With the assumption that
there exists a Lagrange multiplier $u^*$, we shall show this.

\eqref{th:main i} Let $u^*$ be a Lagrange multiplier for a solution $x^*$ to problem
\eqref{eq:main2}. Then by definition of the saddle point we have
\begin{eqnarray}
    g(s^k)- g_* = g(s^k)-g(x^*) &\geq&
    \LMlr{A^TA u^*, s^k-x^*}\nonumber\\
    &\geq& -\LMn{Au^*}\cdot \LMn{A(s^k-x^*)} = -\delta  \sqrt{f(s^k)-f_*},\label{eq:dual}
\end{eqnarray}
where we have used \eqref{eq:id_f} and denoted $\delta := \sqrt 2 \LMn{Au^*}$. Our goal is to show that
$k (\hat F_k-g_*)$ is bounded from below. Indeed
\begin{eqnarray}
    k(\hat F_k - g_* ) &\geq& k(g(s^k)-g_*) + \sigma k^2 (f(s^k)-f_*) \nonumber\\
    &\geq& \frac{\sigma k^2}{2}(f(s^k)-f_*)+
    \left(\frac{\sigma k^2}{2}(f(s^k)-f_*) - \delta k\sqrt{f(s^k)-f_*}\right)
    %
. \label{eq:Fk}
\end{eqnarray}
The real-valued function $t \mapsto \frac{\sigma k^2}{2}t^2
-\delta k t$ attains its smallest value $-\frac{\delta ^2}{2\sigma}$ at
$t=\frac{\delta }{\sigma k}$. Hence, we can deduce in~\eqref{eq:Fk} that
\begin{equation}\label{eq:est_for_Fk}
    k(\hat F_k - g_*) \geq \frac{\sigma k^2}{2}(f(s^k)-f_*)-\frac{\delta ^2}{2\sigma}
    \geq -\frac{\delta^2}{2\sigma}.
\end{equation}
We can then apply the supermartingale theorem to the shifted
random variable $V_k(x^*)+ \delta^2/(2\sigma)$ to conclude that
\[
\sum_{k=0}^\infty \sigma (f(x^k)-f_*)+ \frac{p}{2} \LMn{\hat x^{k+1}-x^k}^2_\LMe<\infty
\quad a.s.
\]
and the sequence $V_k(x^*)$ converges almost surely to a random variable with
distribution bounded below by
$-\frac{\delta ^2}{2\sigma}$.  Thus $(f(x^k)-f_*)=o(1/k)$,
$\lim_{k\to\infty}\LMn{\LMhx^{k+1}-x^k}_\LMe^2 =0$
and, by the definition of $V_k(x^*)$, the sequence
$(x^k)$ is pointwise a.s. bounded thus $k(\hat F_k - g_*)$ is
bounded above and hence, by \eqref{eq:est_for_Fk}, $f(s^k)-f_* = O(1/k^2)$ a.s.

By the definition of $z^k$ in
step~3 of Algorithm~\ref{alg:tseng} with $\theta_k = \frac{1}{k+1}$, we
conclude that $f(z^k)-f_* = O(1/k^2)$ a.s.  This yields a useful
estimate:
\begin{multline}
    \label{eq:est_grad} \LMlr{\nabla f(z^k), \LMhx^{k+1}-x^*}
=\LMlr{A^T(Az^k-b), \LMhx^{k+1}-x^*} = \LMlr{A^TA(z^k-x^*), \LMhx^{k+1}-x^*}
\\ \leq \LMn{A(z^k-x^*)} \LMn{A(\LMhx^{k+1}-x^*)} =
\sqrt{(f(z^k)-f_*)(f(\LMhx^{k+1})-f_*)}.
\end{multline}
Since $f(\LMhx^{k+1})-f_* \to 0$ a.s., we have
$\frac{\sigma}{\theta_k} \LMlr{\nabla f(z^k), \LMhx^{k+1}-x^*}\to 0$.

Now consider the pointwise sequence of realizations of the random variables $x^k$
over $\LMo\in\Omega$ for which the sequence is bounded.  Since these
realizations of sequences are bounded, they possess cluster points.  Let
$x'$ be one such cluster point.  This point is feasible since
in the limit $f(x')-f_* = 0$.  Writing down inequality~\eqref{eq:2}
for the convergent subsequence with $x=x^*$ and taking the limit
we obtain $g(x') \leq g(x^*)$. Thus the pointwise cluster points
$x'\in S$ a.s.

We show next that the cluster points
are unique.  Again, for the same pointwise realization over $\LMo\in\Omega$
as above, suppose there is another cluster point $x''$.  By the argument
above, $x''\in S$.  The point $x^*\in S$ was arbitrary and so we can replace
this by $x'$.  To reduce clutter, denote $\alpha_k = k(\hat F_k
-g_*)$ and denote the subsequence of $(x^k)$ converging to $x'$ by
$\left(x^{k_i}\right)$ for $i\in\mathbb{N}$ and the subsequence converging to
$x''$
by $\left(x^{k_j}\right)$ for $j\in\mathbb{N}$. We thus have
\begin{align} \lim_{k\to \infty }V_k(x') & = \lim_{i\to
\infty}V_{k_i}(x') =
\lim_{i\to\infty}(\frac{p^2}{2}\LMn{x^{k_i}-x'}^2_{\tau^{-1}} +
\alpha_{k_i}) =\lim_{i\to\infty}\alpha_{k_i}\\ & =
\lim_{j\to\infty}V_{k_j}(x') = \lim_{j\to\infty}(\frac{p^2}{2}
\LMn{x^{k_j}-x'}^2_{\tau^{-1}} + \alpha_{k_j}) = \LMn{x''-\tilde
x_1}^2_{\tau^{-1}} + \lim_{j\to\infty}\alpha_{k_j},
\end{align}
from which $\lim_{i\to\infty}\alpha_{k_i} =
\frac{p^2}{2}\LMn{x''-x'}^2_{\tau^{-1}} +
\lim_{j\to\infty}a_{k_j}$ follows. Similarly, replacing $\tilde x$ with $x''$,
we derive $\lim_{j\to\infty}\alpha_{k_j} = \frac{p^2}{2}\LMn{\tilde
x_1-x''}^2_{\tau^{-1}} + \lim_{i\to\infty}\alpha_{k_i},$ from which
we conclude that $x' = x''$. Therefore, the whole
sequence $(x^k)$ converges pointwise almost surely to a solution.

\emph{Convergence of $(s^k)$}. First, recall from Lemma~\ref{l:sk}
that $s^k$ is a convex combination of $x^j$ for $j=0,\dots,
k$. Second, notice that for all $j$, $\beta_{k+1}^j \to 0$ as
$k\to \infty$. Hence, by the Toeplitz theorem (Exercise 66 in
\cite{polya}) we conclude that $(s^k)$ converges pointwise almost surely to the
same solution as $(x^k)$. By this, \eqref{th:main i} is proved.

\smallskip

\eqref{th:main ii}  Taking the total expectation $\LME$ of both sides
of \eqref{eq:10}, we obtain
\begin{equation*} \LME\bigl[V_{k+1}(x^*) +
\frac{p}{2}\LMn{\LMhx^{k+1}-x^k}^2_\LMe\bigr] \leq \LME[ V_k(x^*)].
\end{equation*}
Iterating the above,
\begin{equation}
    \label{eq:full0} \LME\Bigl[\frac{p^2}{2}\LMn{x^{k}-x^*}^2_{\tau^{-1}} +
k(\hat F_{k} - g_*) +
\sum_{i=1}^{k}\frac{p}{2}\LMn{\LMhx^{i}-x^{i-1}}^2_\LMe\Bigr] \leq
\frac{p^2}{2}\LMn{x^{0}-x^*}^2_{\tau^{-1}} =: C.
\end{equation} Since $g(s^k)\leq \hat g_k$, one has
\begin{equation}
    \label{eq:full} \LME\Bigl[\frac{p^2}{2}\LMn{x^{k}-x^*}^2_{\tau^{-1}} +
\sigma k^2(f(s^{k}) - f_*) +
\sum_{i=1}^{k}\frac{p}{2}\LMn{\LMhx^{i}-x^{i-1}}^2_\LMe\Bigr] \leq C +
\LME[k(g_* - g(s^k))].
\end{equation}
From the above equation it follows that
\begin{equation}
    \label{eq:full2} \LME\Bigl[\frac{p^2}{2k}\LMn{x^{k}-x^*}^2_{\tau^{-1}} +
\sigma k (f(s^k)-f_*)\Bigr] \leq \frac{C}{k}+ \LME[g_*-g(s^k)].
\end{equation} Recall that by our assumption, $g$ is bounded from
below: let $g(x)\geq -l$ for some $l>0$. Thus, from \eqref{eq:full2}
one can conclude that
\begin{equation*} \LME[f(s^k)- f_*]\leq \frac{C}{\sigma k^2} +
  \frac{g_*+l}{\sigma k}.
\end{equation*}
This means that almost surely $f(s^k)-f_* =
O(1/k)$. Later we will improve this estimation. Let $\Omega_1$ be an
event of probability one such that for every $\LMo\in \Omega_1$,
$f(s^k(\LMo ))-f_* = O(1/k)$.

\emph{Boundedness of $(s^k)$}. First, we show that $(s^i(\LMo ))_{i\in
\mathcal{I}}$ with $\mathcal{I} = \{i\colon g(s^i) <g_*\}$ is a
bounded sequence for every $\LMo\in \Omega_1$.  For this we revisit the
arguments from \cite{solodov2007explicit,malitsky2017}.  By assumption
the set $S = \{x\colon g(x)\leq g_*, f(x)\leq f_*\}$ is nonempty and
bounded. Consider the convex function $\varphi(x) = \max\{g(x)- g_*,
f(x)-f_* \}$. Notice that $S$ coincides with the level set
$\mathcal{L}(0)$ of $\varphi$:
\begin{equation*} S = \mathcal{L}(0) = \{ x\colon \varphi(x)\leq 0\}.
\end{equation*} Since $\mathcal{L}(0)$ is bounded, the set
$\mathcal{L}(c)=\{x\colon \varphi(x)\leq c\}$ is also bounded for any $c\in
\LMR$. Fix any $c\geq 0$ such that $f(s^{k}(\LMo ))\leq c$ for all $k$ and
$\LMo\in \Omega_1$. Since $g(s^{i})-g_*<0\leq c$ for $i \in
\mathcal{I}$, we have that for every $\LMo\in \Omega_1$, $s^{i}(\LMo )\in
\mathcal{L}(c)$, which is a bounded set. Hence, $(s^{i}(\LMo ))_{i\in
\mathcal I}$ is bounded for every $\LMo\in \Omega_1$.

Note that from \eqref{eq:full} we have a useful consequence: both
sequences $(x^j)$, $(\LMhx^{j}-x^{j-1})$ with $j \notin \mathcal{I}$ are
pointwise a.s.\ bounded.  From this it follows immediately that for $j\notin
\mathcal I$, $(x^j-x^{j-1})$ is pointwise a.s.\ bounded as well. Hence, there
exists an event $\Omega_2$ of probability one such that $x^j(\LMo )$,
$(x^{j}-x^{j-1})(\LMo )$ are bounded for each $\LMo\in \Omega_2$.
We now restrict ourselves to points $\LMo\in \Omega_1 \cap \Omega_2$.
Clearly, the latter set has a probability one.
Let $M_\LMo$
be such a constant that, for $\LMo\in \Omega_1 \cap \Omega_2$
\begin{equation}
    \begin{aligned} &\LMn{s^i(\LMo )}\leq M_\LMo \quad & \forall i\in \mathcal I
\\ & \LMn{(x^j+ (n-1)(x^j-x^{j-1}))(\LMo )}\leq M_\LMo \quad & \forall j\notin
\mathcal I.
\end{aligned}
\end{equation}
We prove by induction that the sequence
$(s^k(\LMo))$ is bounded, hence $(s^k)$ is pointwise a.s. \ bounded. Suppose that
for index $k$, $\LMn{s^k(\LMo )} \leq M_\LMo$. If for index $k+1$,
$g(s^{k+1}(\LMo ))< g_*$ then we are done: $k+1\in \mathcal{I}$ and
hence, $\LMn{s^{k+1}(\LMo )}\leq M_\LMo$. If $g(s^{k+1}(\LMo ))\geq g_*$, then
$(k+1)\notin \mathcal I$. By the definition of $s^k$ in
Algorithm~\ref{alg:tseng}, $s^{k+1} =
\theta_k(nx^{k+1}-(n-1)x^k)+(1-\theta_k)s^k$, and hence
\begin{equation*} \LMn{s^{k+1}} \leq \theta_k \LMn{x^{k+1}+ (n-1)(x^{k+1}-x^{k})} +
(1-\theta_k)\LMn{s^k},
\end{equation*} which shows that $\LMn{s^{k+1}(\LMo )}\leq M_\LMo$. This
completes the proof that $s^k(\LMo )$ is bounded and hence $(s^k)$ is
pointwise a.s.\ bounded.

\emph{Convergence of $(s^k)$.} Recall that a.s.\
$f(s^k)-f_* = O(1/k)$. Hence, all limit points of $s^k(\LMo )$ are
feasible for any $\LMo\in \Omega_1\cap \Omega_2$. This means that for
every $\LMo\in \Omega_1\cap \Omega_2$,
$\liminf_{k\to\infty} g(s^k(\LMo ))\geq g_*$. If one assumes that
there exist an event $\Omega_3$ of non-zero probability such that for
every $\LMo\in \bigcap_{i=1}^3\Omega_i$,
$\limsup_{k\to\infty} g(s^k(\LMo )) > g_*$, then taking the limit
superior in \eqref{eq:full2}, one obtains a contradiction. This yields
that almost surely $\lim_{k\to \infty} g(s^k) = g_*$ and hence, almost
surely all limit points of $(s^k)$ belong to $S$. Using the obtained
result, we can improve the estimation for $f(s^k)-f_*$. Now from
\eqref{eq:full2} it follows that almost surely $f(s^k)-f_* = o(1/k)$.
\qed
\end{proof}

\subsection{Possible generalizations}
\label{sec:generalizations}

\textbf{Strong convexity.} When $g$ is strongly convex in
\eqref{eq:main}, the estimates in Theorem~\ref{th:main} can be
improved and in case (ii) the convergence of $(s^k)$ to the solution
of \eqref{eq:main} can be proved. For this one needs to combine the
proposed analysis and the analysis from~\cite{malitsky2017}.

\smallskip
\noindent \textbf{Adding the smooth term and ESO}. Instead, of solving
\eqref{eq:main}, one can impose more structure for this problem:
\begin{equation}
    \label{eq:gen1}
    \min_x g(x) + h(x) \quad \text{s.t.}\quad Ax=b,
\end{equation}
where in addition to the assumptions in Section~\ref{sec:intro}, we
assume that $h\colon \LMR^N\to \LMR$ is a smooth convex function with
a Lipschitz-continuous gradient $\nabla h$. In this case the new
coordinate algorithm can be obtained by combining analysis from
\cite{fercoq2015accelerated} and \cite{malitsky2017}. The expected
separable overapproximation (ESO), proposed in \cite{fercoq2015accelerated},
can help here much more, due to the new additional term $h$.

\smallskip
\noindent
\textbf{Parallelization.} For simplicity, our analysis has focused
only on the update of one block in every iteration. Alternatively,  one could
choose a random subset of blocks and update all of them more or less
by the same logic as in Algorithm~\ref{alg:pd}. This is one of the
most important aspects of the algorithm that provides for fast implementations. The analysis of
such extension will involve only a bit more tedious expressions with
expectation, for more details we refer again to
\cite{fercoq2015accelerated}, where this was done for the
unconstrained case: $\min_x g(x)+h(x)$. More challenging is to develop
an asynchronous parallel extension of the proposed method in the
spirit of \cite{peng2016arock}, but with the possibility of taking larger
steps.

\section{Numerical experiments}
\label{sec:numer-exper}
This section collects several numerical tests to illustrate the
performance of the proposed methods. Computations\footnote{All codes
can be found on \url{https://gitlab.gwdg.de/malitskyi/coo-pd.git}}
were performed using Python 3.5 on a standard laptop running 64-bit Debian GNU/Linux.

\subsection{Basis pursuit}
For a given  matrix $A\in \LMR^{m\times n}$ and observation vector $b\in
\LMR^m$, we want to find a sparse vector $x^\dag \in \LMR^n$ such that
$Ax^\dag = b$. In standard circumstances we have $m\ll n$, so the linear
system is underdetermined. Basis pursuit, proposed by Chen and Donoho
\cite{chen:basis_pursuit,chen2001atomic},  involves  solving
\begin{equation}
    \label{num:bp-1}
    \min_{x} \LMn{x}_1 \quad \text{s.t.}\quad Ax = b.
\end{equation}
In many cases it can be shown (see \cite{candes2008introduction} and
references therein) that  basis pursuit is able to recover
the true signal $x^\dag$  even when $m\ll n$.

A simple observation says that problem~\eqref{num:bp-1} fits well
in our framework. For given $m,n$ we generate the matrix $A\in \LMR^{m\times n}$ in two ways:
\begin{itemize}
    \item Experiment 1: $A$ is a random Gaussian matrix, i.e.\ every
    entry of $A$ drawn from the normal distribution
    $\mathcal{N}(0,1)$. A sparse vector $x^\dag\in \LMR^n$ is
    constructed by choosing at random $5\%$ of its entries
    independently and uniformly from $(-10,10)$.

    \item Experiment 2: $A = R\Phi$, where $\Phi \in \LMR^{n\times n}$ is
    the discrete cosine transform and $R\in \LMR^{m\times n}$ is the
    random projection matrix: $Rx$ randomly extracts $m$ coordinates from
    the given $x$. A sparse vector $x^\dag\in \LMR^n$ is
    constructed by choosing at random $50$ of its entries among the first
    $100$ coordinates independently from the normal distribution $\mathcal N(0,1)$. The
    remaining entries of $x^\dag$ are set to $0$.
\end{itemize}

For both experiments we generate $b = Ax^\dag$. The starting point
for all algorithms is $x^0=0$.  For all methods we use the same
stopping criteria:
\begin{equation*}
\LMn{Ax^k-b}_\infty \leq 10^{-6}\quad \text{and}\quad \deltaist(-A^Ty^k,
\partial_{\LMn{\cdot}_1}(x^k))_\infty\leq 10^{-6}.
\end{equation*}

For given  $A$ and $b$ we compare the performance of
PDA and the proposed methods: Block-PDA with $n_{\text{block}} = n/w$ blocks of the
width $w$ each and Coo-PDA with every single coordinate as a
block. In the first and the second experiments the width of blocks for
the Block-PDA is $w=50$.

The numerical behavior of all three methods depends strongly on the choice
of the stepsizes. It is easy to artificially handicap PDA in comparisons with 
almost any algorithm: just take very bad steps for PDA.  To be
fair, for every test problem we present the best results of PDA among
all with steps $\sigma = \dfrac{1}{2^j \LMn{A}}$, $\tau = \dfrac{2^j}{\LMn{A}}$, for
$j=-15,-14,\dots, 15$. Instead, for the proposed methods we always take
the same step $\sigma = \dfrac{1}{2^j n_{\text{block}}}$, where we set
$j=11$ for the first experiment and $j=8$ for the second one.


\begin{table}\centering
    \footnotesize
    \caption{Comparison of PDA, Block-PDA and Coo-PDA for
    problem~\eqref{num:bp-1}. Experiment 1}
    \label{tab:bp-1}
\ra{1}
\begin{tabular}{@{}rrrrrrrrrr@{}}\toprule
Algorithm&  \phantom{b}& \multicolumn{2}{c}{$m=1000$, $n = 4000$} & \phantom{abc}& \multicolumn{2}{c}{$m=2000$, $n=8000$} &
\phantom{abc} & \multicolumn{2}{c}{$m=4000$, $n=16000$}\\
  \cmidrule{3-4} \cmidrule{6-7}  \cmidrule{9-10}
         & &  epoch &  CPU && epoch &  CPU && epoch &  CPU \\
  \midrule \\
  \addlinespace[-0.3cm]
  PDA       && 777 & 24 && 815 & 89 && 829 & 333\\
  Block-PDA && 108 & 4 && 103 & 12 && 107 & 51\\
  Coo-PDA   && 79  & 2 && 73  & 7 && 94 & 34\\
\bottomrule
\end{tabular}
\end{table}

Tables~\ref{tab:bp-1} and \ref{tab:bp-2} collect information of how
many epochs and how much CPU time (in seconds) is needed for each
method and every test problem to reach the $10^{-6}$ accuracy. The
term ``epoch'' means one iteration of the PDA or $n_{\text{block}}$
iterations of the coordinate PDA. By this, after $k$ epochs the $i$-th
coordinate of $x$ will be updated on average the same number of times
for the PDA and our proposed method. The CPU time might be not a good
indicator, since it depends on the platform and the
implementation. However, the number of epochs gives an exact number of
arithmetic operations. This is a fair benchmark characteristic.

Concerning the stepsizes, we reiterate that the PDA was always taken with
the best steps for every particular problem: for the first experiment
(when $A$ is Gaussian), the parameter $j$ was usually among $4,5,6,7$.
For the second experiment, the best values of $j$
were among $0,\dots, 6$.


\begin{table}\centering
    \footnotesize
        \caption{Comparison of PDA, Block-PDA and Coo-PDA for
    problem~\eqref{num:bp-1}. Experiment 2}
    \label{tab:bp-2}
\ra{1}
\begin{tabular}{@{}rrrrrrrrrr@{}}\toprule
Algorithm&  \phantom{b}& \multicolumn{2}{c}{$m=1000$, $n = 4000$} & \phantom{abc}& \multicolumn{2}{c}{$m=2000$, $n=8000$} &
\phantom{abc} & \multicolumn{2}{c}{$m=4000$, $n=16000$}\\
  \cmidrule{3-4} \cmidrule{6-7}  \cmidrule{9-10}
  & &  epoch &  CPU && epoch &  CPU && epoch &  CPU \\
  \midrule \\
  \addlinespace[-0.3cm]
  PDA && 303 & 9 && 284 & 29 && 286 & 122\\
  Block-PDA && 41 & 2 && 40 & 5 && 36 & 18\\
  Coo-PDA && 27 & 1 && 23 & 3 && 24 & 10\\
\bottomrule
\end{tabular}
\end{table}

\subsection{Basis pursuit with noise}
In many cases, a more realistic scenario is when our measurements are
noisy. Now we wish to recover a sparse vector $x^\dag \in \LMR^n$ such
that $Ax^\dag + \LMe = b $, where $\LMe \in \LMR^m$ is the unknown
noise. Two standard approaches \cite{chen2001atomic,candes2006stable}
involve solving either the lasso problem or basis pursuit
denoising:
\begin{equation}
    \label{num:bpd} \min_x \LMn{x}_1 + \frac{\delta }{2}\LMn{Ax-b}^2 \qquad
\text{or}\qquad \min_x \LMn{x}_1 \quad \text{s.t.}\quad \LMn{Ax-b}\leq \delta .
\end{equation}
In order to recover $x^\dag $, both problems require a delicate tuning
of the parameter $\delta $. For this one usually requires   some a priori knowledge about
the noise.

Even with noise, we still apply our method to the
plain basis pursuit problem
\begin{equation}
    \label{num:bp-2}
    \min_x \LMn{x}_1\quad\text{s.t.}\quad Ax=b.
\end{equation}
We know that the proposed method converges even when the linear system
is inconsistent (and this is the case when $A$ is
rank-deficient). First, we may hope that the actual solution $x^*$ of
\eqref{num:bp-2} to which the method converges is not far from the
true $x^\dag$. In fact, if the system $Ax=b$ is consistent, we have
$\LMn{A(x^*-x^\dag)}=\LMe$. Similarly, if that system  is inconsistent, by \eqref{eq:id_f}
we have $f(x^\dag)-f(x^*)=\frac 1 2 \LMn{A(x^\dag-x^*)}^2$. And hence,
again we obtain $ \LMn{A(x^\dag-x^*)}^2\leq 2f(x^\dag)=\LMe^2 $.
Second, it might happen that the trajectory of $(x^k)$ at some point
is even closer to $x^\dag$; in this case the early stopping
can help to recover $x^\dag$.  In fact, as our simulations show, the
convergence of the coordinate PDA to the actual solution $x^*$ is very
slow, though, it is unusually fast in the beginning.

For given $m=1000,n=4000$ we generate two scenarios:
\begin{itemize}
    \item $A\in \LMR^{m\times n}$ is a random Gaussian matrix.
    \item $A = A_L A_R$, where $A_L \in \LMR^{m\times m/2}$ and $A_R \in
    \LMR^{m/2 \times n}$ are two random Gaussian matrices.
\end{itemize}
The latter condition guarantees that the rank of $A$ is at most $m/2$. Hence, with a high
probability the system $Ax=b$ will be inconsistent due to the noise.

We generate a sparse vector $x^\dag \in \LMR^n$ choosing at random $50$ of
its elements  independently and uniformly from $(-10,10)$. Then we
define $b\in \LMR^m$ in two ways: either $b = Ax^\dag+\LMe$, where $\LMe\in \LMR^m$
is a random Gaussian vector or $b$ is obtained by rounding off
every coordinate of $Ax^\dag$ to the nearest integer.

For simplicity we run only the block PDA with blocks of width $50$,
thus we have $n_{\text{block}} = n/50$. The parameter $\sigma$ is chosen
as $\sigma = \frac{1}{2^{25}n_{\text{block}}}$. After every $k$-th epoch we
compute the signal error $\frac{\LMn{x^k-x^\dag}}{\LMn{x^\dag}}$ and the
feasibility gap $\LMn{A^T(Ax^k-b)}$. The results are presented in
Fig.~\ref{fig:gauss_matrix}, \ref{fig:lowrank_matrix}.

As one can see from the plots, the convergence of $(x^k)$ to the best
approximation of $x^\dag$ takes place just after few epochs (less than
100). This convergence is very fast  for both the signal error and the
feasibility gap. After that, both lines switch to the slow regime:
the signal error slightly increases and stabilizes after that; the
feasibility gap decreases relatively slowly. Although in practice we do
not know $x^\dag$, we still can use early stopping, since both lines
change their behavior approximately at the same time and it is easy
to control the feasibility gap.

\begin{figure}[ht]
    \centering
    \includegraphics[width=\linewidth]{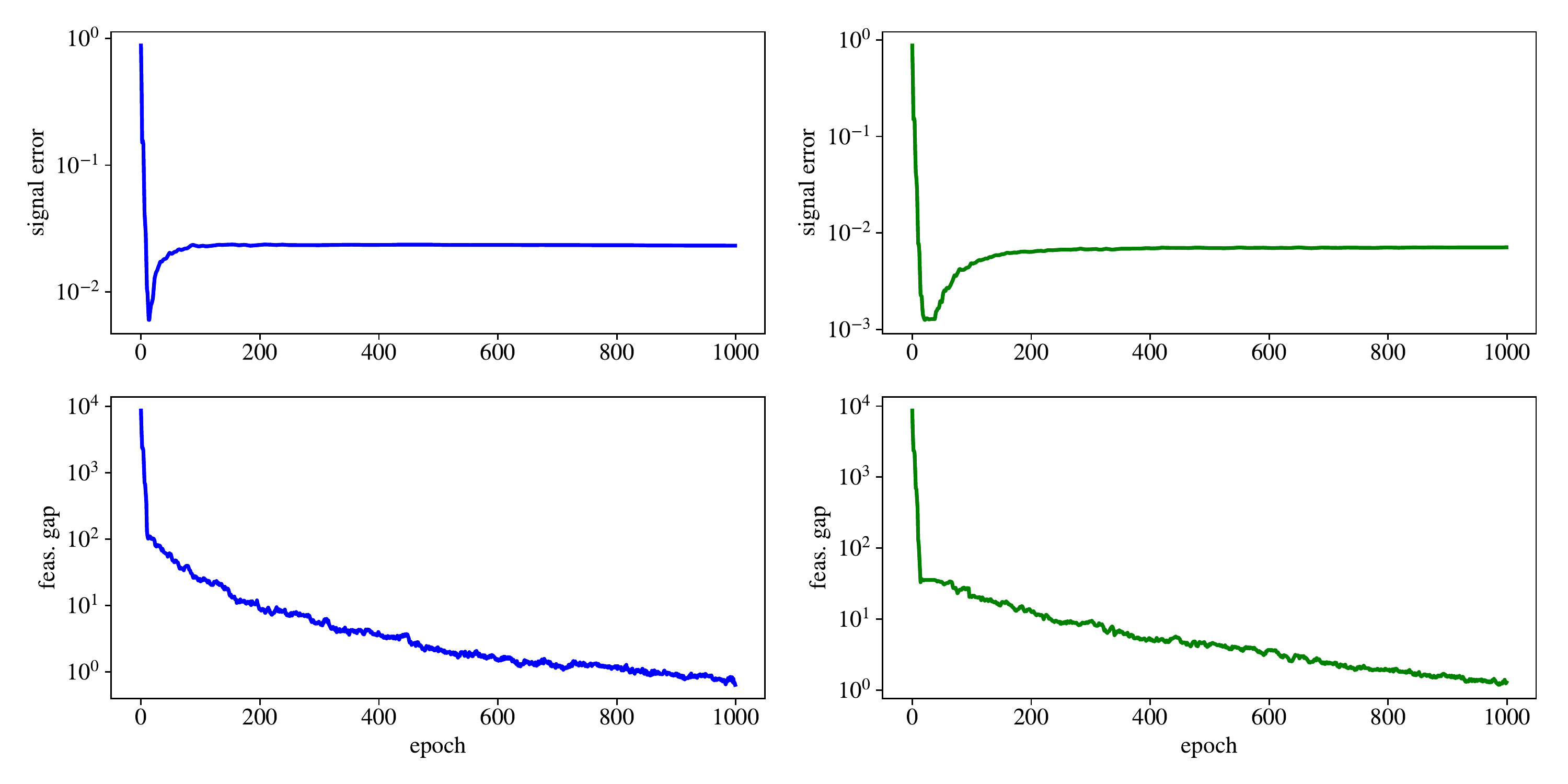}
    \caption{Convergence of block-PDA for noisy basis pursuit. $A$ is
    a random Gaussian matrix. Left (blue): problem with
    $b=Ax^\dag+\LMe$, where $\LMe$ is a random Gaussian vector. Right
    (green): problem with $b$ obtained by rounding off $Ax^\dag$ to
    the nearest integer. Top: signal error. Bottom: feasibility
    gap.}
    \label{fig:gauss_matrix}
\end{figure}

\begin{figure}[ht]
    \centering
    \includegraphics[width=\linewidth]{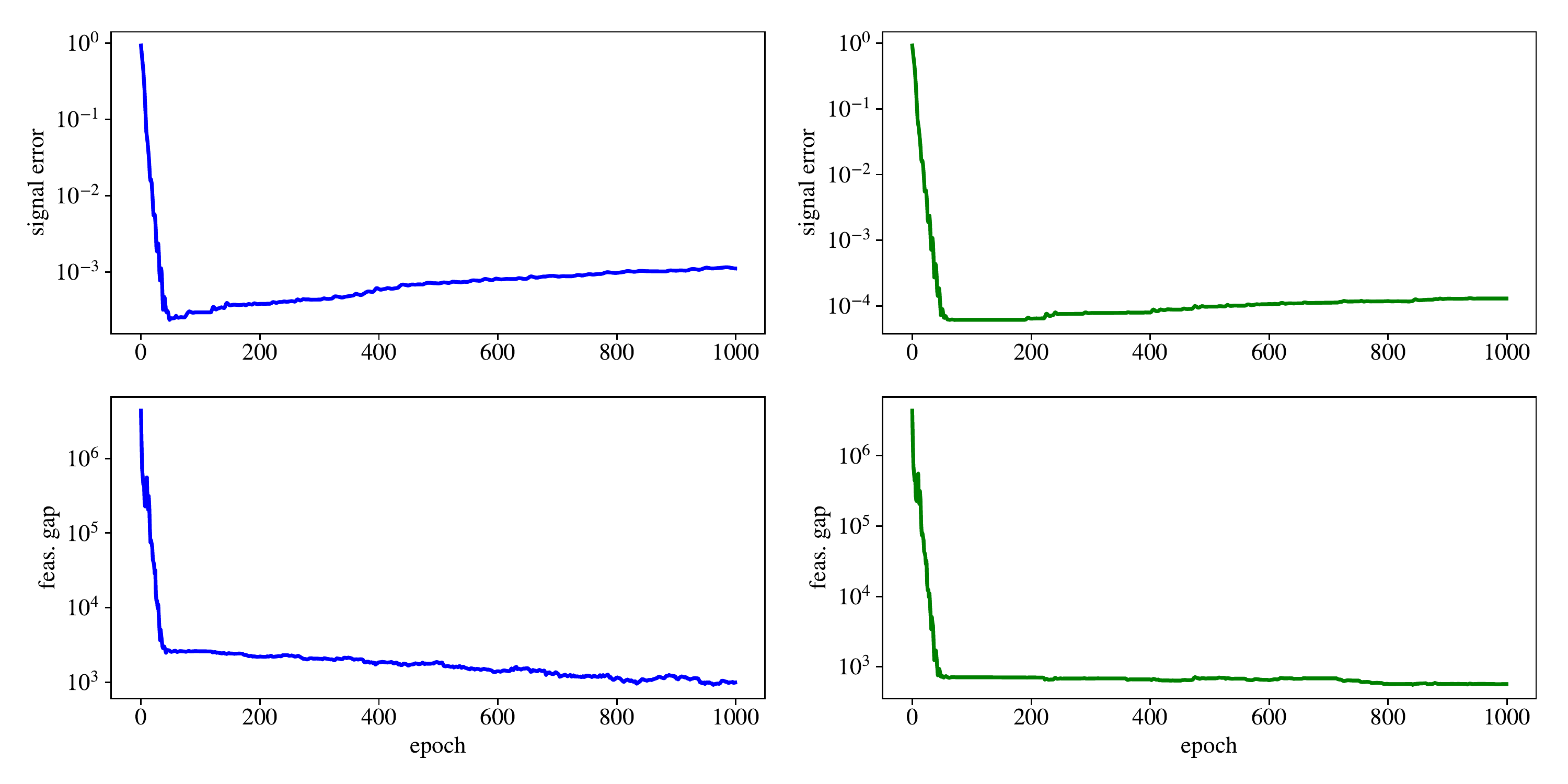}
    \caption{Convergence of block-PDA for noisy basis pursuit. $A$ is
    a low-rank matrix $A=A_LA_R$. Left (blue): problem with
    $b=Ax^\dag+\LMe$, where $\LMe$ is a random Gaussian vector. Right
    (green): problem with $b$ obtained by rounding off $Ax^\dag$ to
    the nearest integer. Top: signal error. Bottom: feasibility
    gap.}

    \label{fig:lowrank_matrix}
\end{figure}
Finally, we would like to illustrate the proposed approach on a
realistic non-sparse signal. We set $m=1000, n=4000$ and generate a random signal
$w\in \LMR^n$ that has a sparse representation in the dictionary of the
discrete cosine transform, that is $w = \Phi x^\dag$ with the matrix
$\Phi\in \LMR^{n\times n}$ is the discrete cosine transform and
$x^\dag \in \LMR^n$ is the sparse vector with only $50$ non-zero coordinates
drawn from $\mathcal N(0,1)$.  The measurements are modeled by a random
 Gaussian matrix $M\in \LMR^{m\times n}$. The observed data is
corrupted by noise: $b = Mw + \LMe$, where $\LMe\in \LMR^m$ is a
random vector, whose entries are drawn from $\mathcal N(0,10)$.

Obviously, we can rewrite the above equation as $b = Ax + \LMe$, where
$A = M\Phi\in \LMR^{m\times n}$.  We apply the proposed block-coordinate
primal-dual method to the problem
\begin{equation}
    \label{num:denois3}
    \min_x \LMn{x}_1 \quad \text{s.t.}\quad A x = b
\end{equation}
with $\sigma = \frac{1}{2^{22}n_{\text{block}}}$ and the width $50$ of each block. The behavior of the
iterates $x^k$ is depicted in Figure~\ref{fig:real_sign}.
Figure~\ref{fig:real_sign2} shows the true signal $w$ and the reconstructed signal $\hat w =
\Phi x^{30}$ after $30$ epochs of the method. The signal error in this
case is $\frac{\LMn{\hat w -w}}{\LMn{w}} = 0.0036$. Interestingly, after $1000$
epochs we obtain the reconstructed signal $\hat w^{1000}$ for which the
error is also quite reasonable: $\frac{\LMn{\hat w^{1000} -w}}{\LMn{w}} = 0.016$.
\begin{figure}[ht]
    \centering
    \includegraphics[width=\linewidth]{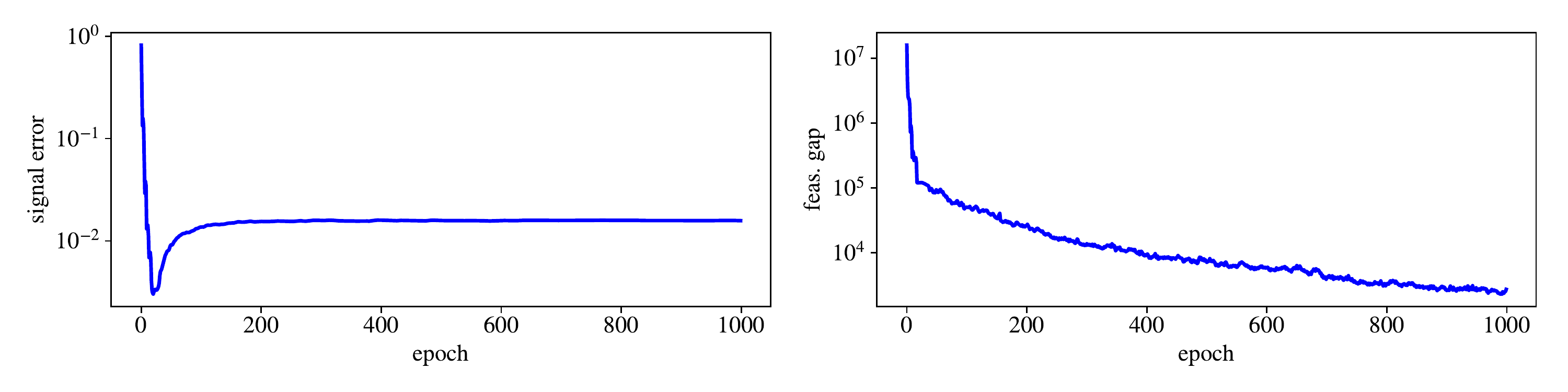}
    \caption{Convergence of block-PDA for noisy basis
    pursuit with a non-sparse signal. $A=M\Phi$. Left: signal error. Right: feasibility
    gap.}
    \label{fig:real_sign}
\end{figure}

\begin{figure}[ht]
    \centering
    \includegraphics[width=\linewidth]{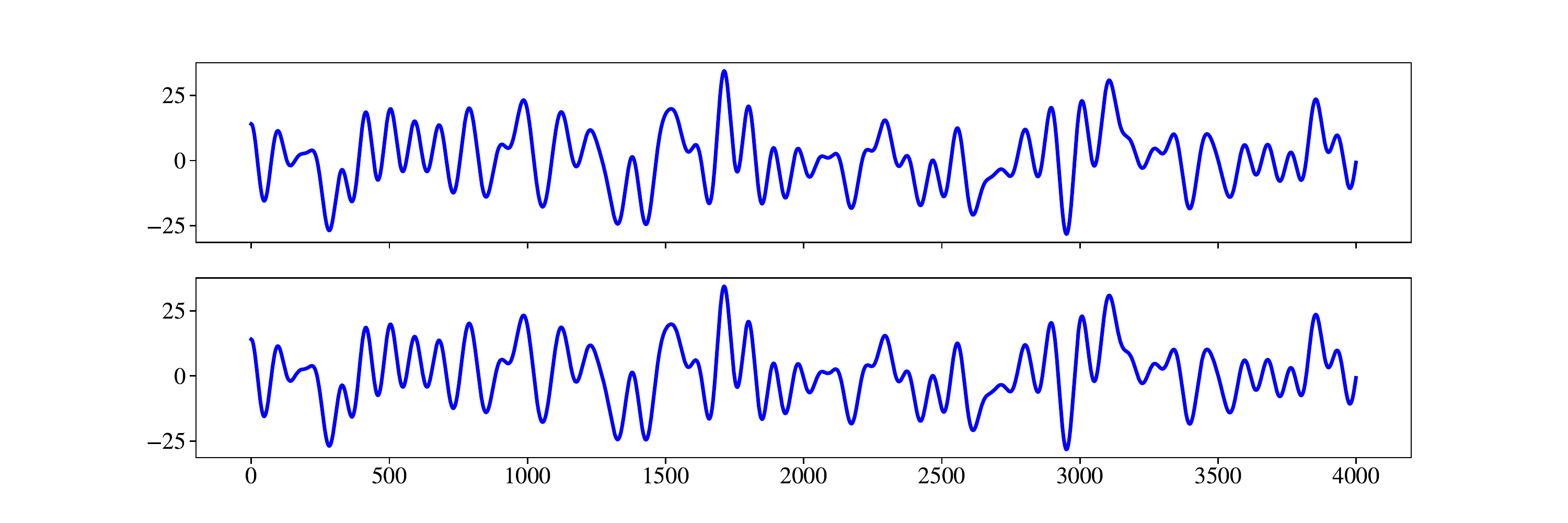}
    \caption{Top: original signal $w$. Bottom: reconstructed signal
    $\hat w$ after $30$ epochs. The signal error is $0.0036$.
    }
    \label{fig:real_sign2}
\end{figure}

Notice that we have reconstructed the signal without any knowledge about
the noise. The only parameter which requires some tuning from our side was the
stepsize $\sigma$. However, the method is not very sensitive to the choice of $\sigma$. In
fact, we were able to reconstruct the signal at least for any $\sigma
=\frac{1}{2^{j}n_{\text{block}}}$ for $j$ from the range $15,\dots, 30$.
Only the number of iterations of the method and the obtained accuracy
changed, though still it was always enough for a good reconstruction.

\subsection{Robust principal component analysis}
\label{sec:rpca}

Given the observed data matrix $M\in \LMR^{n_1\times n_2}$, robust
principal component analysis aims to find a low rank matrix $L$ and a
sparse matrix $S$ such that $M = L+S$. Problems which
involve so many unknowns and the ``low-rank'' component should be
difficult (in fact it is NP-hard), however, they can be
successfully handled via robust principal component analysis, modeled by the
following convex optimization problem
\begin{equation}
    \label{rpca:main} \min_{L, S} \LMn{L}_* + \lambda \LMn{S}_1\quad
\text{s.t.}\quad L + S = M.
\end{equation} 
Here $\LMn{L}_* = \sum_i \sigma_i(L)$ is the nuclear norm of
$L$, $\LMn{S}_1 = \sum_{i,j}|S_{ij}|$, and $\lambda >0$. In
\cite{candes2011robust,wright2009robust} it was shown that under some mild assumptions
problem~\eqref{rpca:main} indeed recovers the true matrices $L$ and
$S$. The RPCA has many applications, see \cite{candes2011robust} and
the references therein.

Problem~\eqref{rpca:main} is well-suited for both
ADMM~\cite{yuan2009sparse,lin2010augmented} and PDA. In fact, since
the linear operator $A$ that defines the linear constraint has a simple
structure $A = [\begin{array}{c|c} I & I \end{array}]$, those two
methods almost coincide.  Obviously, the bottleneck for both methods
is computing the prox-operator with respect to the variable $L$, which involves
computing a singular value decomposition (SVD). As \eqref{rpca:main}
is a particular case of \eqref{eq:main} with two blocks, one can apply
the proposed coordinate PDA. In this case, in every iteration one
should update either the $L$ block or the $S$ block. Hence, on average $k$
iterations of the method require only $k/2$ SVD. Keeping this in mind,
we can hope for a faster convergence of our method compared to the
original PDA.

For the experiment we use the settings from \cite{wright2009robust}. For given
$n_1,n_2$ and rank $r$ we set $L^\dag = Q_1 Q_2$ with
$Q_1 \in \LMR^{n_1 \times r}$ and $Q_2 \in \LMR^{r\times n_2}$, whose
elements are i.i.d.\ $\mathcal N (0,1)$ random variables.
Then we generate $S^\dag\in \LMR^{n_1\times n_2}$  as a sparse matrix whose $5\%$ non-zero
elements are chosen uniformly at random from the range $[-500,
500]$. We set $M = L^\dag + S^\dag$ and $\lambda = \frac{1}{\sqrt{n_1}}$.

The KKT optimality conditions for \eqref{rpca:main} yield
\begin{equation*}
    Y \in \partial (\lambda \LMn{S}_1), \qquad Y \in \partial
    (\LMn{L}_*), \qquad S + L - M = 0,
\end{equation*}
which implies that $(L,S)$ is
optimal whenever
\begin{equation*}
    \partial
    (\lambda \LMn{S}_1) \cap \partial (\LMn{L}_*) \neq \varnothing \quad
    \text{and} \quad S + L -M = 0.
\end{equation*}
Since we do not want to compute $\partial (\LMn{L}_*)$ and
$\partial (\LMn{S}_1)$, we just measure the distance between their two
subgradients which we can compute from the iterates.

Given the current iterates $(L^k, S^k, Y^k)$ for the PDA,
we know that $\frac{1}{\tau} (L^{k-1}-L^k) - Y^{k-1}\in \partial
(\LMn{L^k}_*)$ and $\frac{1}{\tau} (S^{k-1}-S^k) - Y^{k-1}\in \partial
(\LMn{S^k}_1)$. Hence, we can terminate algorithm whenever
\begin{equation*}
    \frac{\LMn{L^{k-1}-L^k+S^{k}-S^{k-1}}}{\tau \LMn{M}} \leq \LMe \quad \text{and} \quad  \frac{\LMn{S^k+L^k-M}_{\infty}}{\LMn{M}} \leq \LMe
\end{equation*}
where $\LMe>0$ is the desired accuracy. A similar criteria was used for the
Coo-PDA termination.

We run several instances of PDA with stepsize $\tau = \frac{2^j}{L}$,
$\sigma = \frac{1}{2^j L}$ for $j=-6,-5,\dots, 12$, where $L=\sqrt 2$ is
the norm of the operator $A= [\begin{array}{c|c} I &
  I \end{array}]$. Similarly, we run several instances of Coo-PDA with
$\tau = (1,1)$ and $\sigma = \frac{1}{2^j}$ for the same range of indices
$j$. We show only the performance of the best instances for both
methods, and it was always the case that $j\in \{6,7,8\}$. The
accuracy $\LMe$ was set to $10^{-6}$.  We also compare PDA-r and
Coo-PDA-r, where instead of an exact SVD a fast randomized SVD solver
was used. We chose the one from the scikit-learn library
\cite{scikit-learn}.

In Table~\ref{tab:rpca} we show the benchmark for PDA and Coo-PDA
with an exact evaluation of SVD and for PDA-r and Coo-PDA-r with
randomized SVD. For different input data, the table collects the total
number of iterations and the CPU time in seconds for all
methods. Notice that for the Coo-PDA the number of evaluation of SVD
is approximately half of the iterations, this is why it terminates
faster.

We want to highlight that in all experiments the unknown matrix
$L^\dag$ was indeed recovered. And, as one can see from the
Table~\ref{tab:rpca}, in all experiments the coordinate primal-dual
algorithms performed better than the standard ones.



\begin{table}\centering
    \footnotesize
        \caption{Comparison of PDA and Coo-PDA for
    problem~\eqref{rpca:main} with exact and approximate evaluation of
    SVD}
    \label{tab:rpca}
\ra{1.0}
\begin{tabular}{@{}rrrrrrrrrrrrrrr@{}}\toprule
$n_1$ & $n_2$ & $r$ & \phantom{b}& \multicolumn{2}{c}{PDA} & \phantom{abc}& \multicolumn{2}{c}{Coo-PDA} &
\phantom{abc} & \multicolumn{2}{c}{PDA-r} & \phantom{abc} & \multicolumn{2}{c}{Coo-PDA-r}\\
  \cmidrule{5-6}  \cmidrule{8-9}  \cmidrule{11-12} \cmidrule{14-15}
      & &  && iter &  CPU && iter &  CPU && iter &  CPU && iter &  CPU\\
  \midrule\\
  1000 & 500 & 20 && 161& 173&& 219 & 121 &&104 &39 &&159 &29\\
  1500 & 500 & 20 && 150& 211&& 200 & 142 &&139 &71&& 182 &47 \\
  2000 & 500 & 50 && 154& 279&& 205 & 185 &&130 &87 &&180 &61 \\
  1000 & 1000 &50 && 188& 1004&&  251 & 678 &&124 &111 &&174 & 76 \\
  2000 & 1000 &50 && 160& 1355&&  200 & 879 &&91 &146 &&120 &95 \\
\bottomrule
\end{tabular}
\end{table}

\begin{acknowledgement}
This research was supported by the German Research Foundation grant SFB755-A4.
\end{acknowledgement}
%




\end{document}